\def\ep{\varepsilon}
\newcommand{\D}{\mathbb{D}}
\newcommand{\T}{\mathbb{T}}
\newcommand{\N}{\mathbb{N}}
\renewcommand{\H}{\mathcal{H}}
\newcommand{\B}{\mathcal{B}}
\newcommand{\vp}{\vp}
\newcommand{\og}{\mathrm{O}}
\newcommand{\op}{\mathrm{o}}
\def\a{\alpha}       \def\b{\beta}        \def\g{\gamma}
     \def\om{\omega}      
       \def\t{\theta}       
                  \def\z{\zeta}
                  \def\vp{\varphi}
\newcommand{\hg}{\mathcal{H}_{g}}
\def\BMOA{\mathord{\rm BMOA}}
\def\VMOA{\mathord{\rm VMOA}}
\def\Dp{{\mathcal D^p_{p-1}}}
\def\Dpa{{\mathcal D^p_{\alpha}}}
\def\Dq{{\mathcal D^q_{q-1}}}
\newtheorem{lettertheorem}{Theorem}
\newtheorem{letterlemma}[lettertheorem]{Lemma}
\newtheorem{defin}{Definition}
\newtheorem{theorem}[defin]{Theorem}
\newtheorem{lemma}[defin]{Lemma}
\newtheorem{proposition}[defin]{Proposition}
\newtheorem{corollary}[defin]{Corollary}
\newtheorem{qu}{Question}
\newenvironment{Prf}{\noindent{\emph{Proof of}}}
{\hfill$\Box$ }
\numberwithin{equation}{section}
\begin{document}

\title[Operator theoretic differences ]
{Operator theoretic differences between Hardy and Dirichlet-type spaces }
\author{Jos\'e \'Angel Pel\'aez}
\address{Departamento de An´alisis Matem´atico, Universidad de M´alaga, Campus de
Teatinos, 29071 M´alaga, Spain} \email{japelaez@uma.es}

\author{F.~P\'erez-Gonz\'alez}
\address{Departamento de An\'{a}lisis Matem\'{a}tico,
Universidad de La Laguna, 38271 La Laguna, Tenerife, Spain}
\email{fernando.perez.gonzalez@ull.es}

\author{Jouni R\"atty\"a}
\address{University of Eastern Finland, P.O.Box 111, 80101 Joensuu, Finland}
\email{jouni.rattya@uef.fi}

\thanks{This research was supported in part by the Ram\'on y Cajal program
of MICINN (Spain), Ministerio de Edu\-ca\-ci\'on y Ciencia, Spain,
(MTM2011-25502), from La Junta de Andaluc{\'i}a, (FQM210) and
(P09-FQM-4468), MICINN- Spain ref.
MTM2011-26538.}

\date{\today}
\keywords{Operator theoretic differences, Hardy spaces, Spaces of Dirichlet type,
Integral operators, Carleson measures}

\subjclass[2010]{Primary 47G10; Secondary  30H10}
\maketitle

\begin{abstract}
For $0<p<\infty $, the Dirichlet-type space $\Dp$ consists of those analytic functions
$f$ in the unit disc $\D$ such that $\int_\D\vert f'(z)\vert\sp
p(1-|z|)^{p-1}\,dA(z)<\infty$.
Motivated by operator theoretic differences between the Hardy space $H^p$ and $\Dp$, the integral operator
\begin{displaymath}
    T_g(f)(z)=\int_{0}^{z}f(\zeta)\,g'(\zeta)\,d\zeta,\quad
    z\in\D,
    \end{displaymath}
acting from one of these spaces to another is studied.
In particular, it is shown, on one hand, that $T_g:\Dp\to H^p$ is bounded if and only if $g\in\BMOA$ when $0<p\le 2$, and, on the other hand, that this equivalence is very far from being true if $p>2$.
Those symbols $g$ such that $T_g:\Dp\to H^q$ is bounded (or compact) when $p<q$ are also characterized. Moreover, the best known sufficient $L^\infty$-type condition for a positive Borel measure $\mu$ on $\D$ to be a $p$-Carleson measures for $\Dp$, $p>2$, is significantly relaxed, and the established result is shown to be sharp in a very strong sense.
\end{abstract}




\thispagestyle{empty}

\section{Introduction and main results}

Let $\H(\D)$\ denote the algebra of all analytic
functions in the unit disc $\D=\{z:|z|<1\}$ of the complex plane
$\mathbb{C}$. Let $\T$
be the boundary of $\D$. The \emph{Carleson square}  associated with an
interval $I\subset\T$ is the set $S(I)=\{re^{it}:\,e^{it}\in I,\,
1-|I|\le r<1\}$, where $|E|$ denotes the normalized Lebesgue measure of the
set $E\subset\T$. For our purposes it is also convenient to define
for each $a\in\D\setminus\{0\}$ the interval
$I_a=\left\{e^{i\t}:|\arg(a e^{-i\t})|\le \pi(1-|a|)\right\}$, and denote
$S(a)=S(I_a)$.
For $0<p\le\infty$, the
\emph{Hardy space} $H^p$ consists
of those $f\in\H(\D)$ for which
    \begin{equation*}\label{normi}
    \|f\|_{H^p}=\lim_{r\to1^-}M_p(r,f)<\infty,
    \end{equation*}
where
    $$
    M_p(r,f)=\left (\frac{1}{2\pi }\int_0^{2\pi}
    |f(re^{i\theta})|^p\,d\theta\right )^{\frac{1}{p}},\quad 0<p<\infty,
    $$
 and
    $$
    M_\infty(r,f)=\max_{0\le\theta\le2\pi}|f(re^{i\theta})|.
    $$
For the theory of the Hardy  spaces,
see~\cite{Duren1970,Garnett1981}.

For $0<p<\infty$ and $-1<\alpha<\infty$, the \emph{Dirichlet space} $\Dpa$ consists of those
$f\in \H(\mathbb D)$ such that
    \[
    \Vert f\Vert _{\Dpa}^p=\int_{\D}|f'(z)|^p(1-|z|^2)^\alpha\,dA(z)+|f(0)|^p<\infty,
    \]
where $dA(z)=\frac{dx\,dy}{\pi}$ is the normalized
Lebesgue area measure on $\D$.

The purpose of this study is to underscore operator theoretic differences between the closely related spaces $\Dp$ and $H^p$. Before going to that, it is appropriate to recall inclusion relations between these spaces. The classical Littlewood-Paley formula implies
$D^2_1=H^2$. Moreover, it is well known~\cite{Flett,LP} that 
    \begin{equation}\label{2}
    \Dp\subsetneq H^p,\quad0<p<2,
    \end{equation}
and
\begin{equation}\label{1}
H\sp p\subsetneq\Dp,\quad2<p<\infty.
\end{equation}
It is also worth mentioning that there are no inclusion relations between $\Dp$ and $\Dq$ when $p\neq q$~\cite{GP2}.

A natural way to illustrate differences between two given spaces is to consider classical operators acting on them. For example, if $0<p<2$, then the behavior of the \emph{composition operator} $C_\vp(f)=f\circ\vp$ reveals that $\Dp$ is in a sense a much smaller space than $H^p$.
Namely, it follows from Littlewood's subordination theorem that $C_\vp: H^p\to H^p$ is bounded for each $0<p<\infty$ and all analytic self-maps $\vp$ of $\D$, but in contrast to this, there are symbols~$\vp$ which induce unbounded operators $C_\vp:\Dp\to\Dp$ when $0<p<2$~\cite[Theorem~1.1(b)]{ChoKooSmithTams2003}. As in the case of Hardy spaces, any composition operator
$C_\vp: \Dp\to \Dp$ is bounded when $2\le p<\infty$.


There are operators which do not distinguish between $\Dp$ and $H^p$. For a given $g\in\H(\D)$, the \emph{generalized Hilbert operator}
$\hg$  is defined by
    \begin{equation}\label{H-g}
    \mathcal{H}_g(f)(z)=\int_0^1f(t)g'(tz)\,dt,
    \end{equation}
for any $f\in\H(\D)$ such that $\int_0^1|f(t)|\,dt<\infty$. If $1<p<\infty$, then $\hg:\Dp\to \Dp$ is bounded (compact) if and only if  $\hg:H^p\to \Dp$ is bounded (compact) by~\cite{GaGiPeSis}. Moreover, the same condition, depending on $g$ and $p$,
describes the boundedness (compactness) of the operators $\hg:\Dp\to H^p$ and $\hg: H^p\to H^p$ when $1<p\le 2$.
As far as we known, the problem of characterizing those symbols $g$ for which $\hg:\Dp\to H^p$ and $\hg: H^p\to H^p$ are bounded when $2<p<\infty$ remains unsolved.

We will next study operator theoretic differences between 
$\Dp$ and 
$H^p$ by considering the integral operator
    \begin{displaymath}
    T_g(f)(z)=\int_{0}^{z}f(\zeta)\,g'(\zeta)\,d\zeta,\quad
    z\in\D.
    \end{displaymath}
The bilinear operator $\left(f,g\right)\rightarrow \int fg'$  was introduced by 
Calder\'on in harmonic analysis in the $60$'s~\cite{Calderon65}. After his research on commutators of singular integral operators, this bilinear form and its different variations, usually called \lq\lq paraproducts\rq\rq, have been extensively studied and they have become a fundamental tool in harmonic analysis. Pommerenke was probably one of the first complex function theorists to consider the operator $T_g$. He used it in late 70's to study the space $\BMOA$, which
consists of those functions in the Hardy space~$H^1$ that have
\emph{bounded mean oscillation}
on the boundary $\T$~\cite{Pom}. The space
$\BMOA$ can be
equipped with several different equivalent norms \cite{Garnett1981}, here we
will use the one given by
    $$
    \|g\|^2_{\BMOA}=\sup_{a\in\D}\frac{\int_{S(a)}|g'(z)|^2(1-|z|^2)\,dA(z)}{1-|a|}+|g(0)|^2.
    $$
Two decades later, in late 90's, the pioneering works by Aleman and
Siskakis~\cite{AS0,AS} lead to an abundant research activity on the operator $T_g$. In particular, those analytic symbols~$g$ such that
$T_g:H^p\to H^q$ is bounded were characterized by Aleman, Cima and Siskakis~\cite{AC,AS0}. Their result in the case $p=q$ says that $T_g:H^p\to H^p$ is bounded if and only if $g\in\BMOA$.
Our first result shows that whenever $0<p\le2$, the domain space $H^p$ can be replaced by $\Dp$.

\begin{theorem}\label{th:1}
Let $0<p\le 2$ and $g\in\H(\D)$. Then the following are
equivalent:
\begin{itemize}
\item[\rm(i)] $T_g:\Dp\to H^p$ is bounded; \item[\rm(ii)] $T_g:
H^p\to H^p$ is bounded; \item[\rm(iii)] $g\in\BMOA$.
\end{itemize}
\end{theorem}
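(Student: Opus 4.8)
The implication (iii) $\Rightarrow$ (ii) is precisely the theorem of Aleman, Cima and Siskakis recalled above, and (ii) $\Rightarrow$ (i) costs nothing: for $0<p<2$ the inclusion $\Dp\subset H^p$ is continuous, so a bounded operator $T_g\colon H^p\to H^p$ restricts to a bounded operator $T_g\colon\Dp\to H^p$; and for $p=2$ the Littlewood--Paley identity gives $\mathcal D^2_1=H^2$, so (i), (ii) and (iii) literally coincide. For the same reason (i) $\Rightarrow$ (iii) holds automatically when $p=2$. Thus the whole content of the statement is the implication (i) $\Rightarrow$ (iii) for $0<p<2$, which I would prove by testing $T_g$ on a standard family of reproducing-kernel-type functions.

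For $a\in\D$ set $f_a(z)=\bigl((1-|a|^2)\big/(1-\overline a z)^2\bigr)^{1/p}$. Differentiating and using the classical estimate $\int_\D (1-|z|^2)^{p-1}|1-\overline a z|^{-(p+2)}\,dA(z)\asymp (1-|a|^2)^{-1}$, one checks at once that $\sup_{a\in\D}\|f_a\|_{\Dp}<\infty$ (the same computation also gives $\|f_a\|_{H^p}^p=1$). Hence, assuming (i), one obtains $\sup_{a\in\D}\|T_g f_a\|_{H^p}<\infty$; and, testing $T_g$ on the constant function $1\in\Dp$, that $g-g(0)=T_g(1)\in H^p$, so $g$ has boundary values in $L^p(\T)$ and $|g(a)|\lesssim(1-|a|^2)^{-1/p}$.

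The crux is a lower estimate for $\|T_g f_a\|_{H^p}$. Writing $S_a(z)=\int_0^z f_a'(\zeta)\bigl(g(\zeta)-g(a)\bigr)\,d\zeta$ and integrating $\bigl(f_a(g-g(a))\bigr)'=f_a'(g-g(a))+f_a g'$ from $0$ to $z$ yields the paraproduct decomposition
\[
 f_a(z)\bigl(g(z)-g(a)\bigr)=T_g(f_a)(z)+S_a(z)+f_a(0)\bigl(g(0)-g(a)\bigr).
\]
Now $\int_\T |f_a(e^{i\t})|^p|g(e^{i\t})-g(a)|^p\,\tfrac{d\t}{2\pi}=\int_\T \tfrac{1-|a|^2}{|1-\overline a e^{i\t}|^2}\,|g(e^{i\t})-g(a)|^p\,\tfrac{d\t}{2\pi}$, and, by the $L^p$-form of the mean-oscillation characterization of $\BMO$ furnished by the John--Nirenberg inequality (see \cite{Garnett1981}), $g\in\BMOA$ if and only if these Poisson averages are bounded uniformly in $a\in\D$; moreover $|f_a(0)|^p|g(0)-g(a)|^p\lesssim(1-|a|^2)(1-|a|^2)^{-1}\lesssim1$ by the bound on $|g(a)|$ above. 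So everything reduces to proving $\sup_{a\in\D}\|S_a\|_{H^p}<\infty$.

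This last bound is the step I expect to be the main obstacle. The naive estimate via $\|T_{f_a}\|_{H^p\to H^p}\asymp\|f_a\|_{\BMOA}\asymp(1-|a|^2)^{-1/p}$ is hopelessly lossy, so one must use both that $g-g(a)$ vanishes at $a$ and that $|f_a'(z)|$ is sharply concentrated at $z=a$; this concentration makes the area integral $\int_{\Gamma(e^{i\t})}|f_a'(z)|^2|g(z)-g(a)|^2\,dA(z)$ over a Stolz angle $\Gamma(e^{i\t})$ be essentially governed by a single Whitney scale, which is exactly what defeats the loss coming from the concavity of $t\mapsto t^{p/2}$ when $p<2$. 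A convenient route: establish first that $g$ lies in the Bloch space --- immediate from (i), since $(1-|a|^2)^{-1/p}|g'(a)|=|(T_g f_a)'(a)|\lesssim\|T_g f_a\|_{H^p}(1-|a|^2)^{-1-1/p}$ gives $|g'(a)|(1-|a|^2)\lesssim1$ --- and then use that, $g$ being Bloch, $|g(z)-g(a)|$ grows only logarithmically in the hyperbolic distance from $a$, which together with the decay of $f_a'$ away from $a$ renders $\|S_a\|_{H^p}$ controllable. Feeding $\sup_a\|S_a\|_{H^p}<\infty$ back into the decomposition gives $\sup_{a\in\D}\int_\T \tfrac{1-|a|^2}{|1-\overline a e^{i\t}|^2}|g(e^{i\t})-g(a)|^p\,d\t<\infty$, hence $g\in\BMOA$, which finishes (i) $\Rightarrow$ (iii).
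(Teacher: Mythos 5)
Your argument is correct in outline, but it is a genuinely different proof from the one in the paper, and in fact it is precisely the kind of proof the authors say they are avoiding. You run the classical Aleman--Cima scheme: test on the kernels $f_a$, use the paraproduct identity $f_a(g-g(a))=T_g(f_a)+S_a+f_a(0)(g(0)-g(a))$, and identify $\BMOA$ through the uniform boundedness of the Poisson averages $\int_\T\frac{1-|a|^2}{|1-\overline{a}e^{i\t}|^2}|g(e^{i\t})-g(a)|^p\,d\t$ --- i.e.\ through the conformal-invariance/John--Nirenberg description of $\BMOA$. The paper instead works entirely with the Fefferman--Stein square-function formula: it applies H\"older with exponents $\a,\b$ satisfying $\b/\a=p/2$ to $\int_{S(a)}|g'|^2(1-|z|^2)\,dA$, dualizes, controls the resulting pairing by the H\"ormander maximal function via the Carleson embedding (Theorem~A), and closes a self-improving inequality after an approximation by dilations $g_r$ (the uniform bound \eqref{2n}); no conformal invariance or John--Nirenberg input is used. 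What your route buys is brevity and familiarity; what the paper's route buys is a technique that survives in settings where those $\BMOA$ properties are unavailable. Two points in your sketch need to be nailed down. First, the key estimate $\sup_a\|S_a\|_{H^p}<\infty$ is not an obstacle but it must actually be computed: for $z\in\Gamma_\sigma(\z)$ at height $t=1-|z|$ one has $|1-\overline{a}z|\asymp\max(t,\d)$ with $\d=|1-\overline{a}\z|$, and the Bloch bound $|g(z)-g(a)|\lesssim\|g\|_{\B}\bigl(1+\log\frac{|1-\overline{a}z|^2}{(1-|z|^2)(1-|a|^2)}\bigr)$ gives
\begin{equation*}
\int_{\Gamma_\sigma(\z)}|f_a'(z)|^2|g(z)-g(a)|^2\,dA(z)
\lesssim\|g\|_{\B}^2\,\frac{(1-|a|^2)^{2/p}\bigl(1+\log\frac{\d}{1-|a|}\bigr)^2}{\d^{4/p}},
\end{equation*}
whence $\|S_a\|_{H^p}^p\lesssim\|g\|_{\B}^p\int_\T\frac{(1-|a|^2)}{|1-\overline{a}\z|^2}\bigl(1+\log\frac{|1-\overline{a}\z|}{1-|a|}\bigr)^p|d\z|\lesssim\|g\|_{\B}^p$, uniformly in $a$; so the step does close, for every $0<p<2$. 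Second, for $0<p<1$ the implication ``uniformly bounded $p$-th power oscillation averages $\Rightarrow g\in\BMOA$'' is not a consequence of John--Nirenberg (which gives only the reverse direction when $p<1$); you need the John--Str\"omberg-type fact that the $L^p$-oscillation conditions define the same space $\BMO$ for all $p>0$. With these two points supplied, your proof of (i)$\Rightarrow$(iii) is complete and independent of the paper's.
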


The implication (ii)$\Rightarrow$(i) is a direct consequence of \eqref{2}, so our contribution here consists of showing (i)$\Rightarrow$(iii).
The proof of the implication (ii)$\Rightarrow$(iii) in~\cite{AC,AS0} relies on several powerful properties of
$\BMOA$ and $H^p$ such as the conformal invariance
of $\BMOA$. Our proof is based on
a circle of ideas developed in \cite[Chapter~4]{PelRat}, and does not rely on these properties. Instead, the Fefferman-Stein formula~\cite{Zygmund59}, which states that
    \begin{equation}\label{fsest}
    \|f\|_{H^p}^p \asymp\int_\T S_f^p(\z)\,|d\z|+|f(0)|^p,
    \end{equation}
plays an important role in the reasoning. Here, $|d\z|$ denotes the arclength measure on $\T$, 
$S_f$ denotes the usual square function, also called the Lusin area function,
    \begin{equation}
    S_f(\zeta)=\left(\int_{\Gamma_\sigma(\zeta)}|f'(z)|^2\,dA(z)\right)^{1/2},\quad\zeta\in\T,
    \end{equation}
where $\Gamma_\sigma(\zeta)$ denotes a nontangential approach region (a Stolz angle) with vertex at
$\zeta$ and of aperture $\sigma$.

We also show that the statement in Theorem~\ref{th:1} drastically fails for $p>2$.
In order to give the precise statement, we will need to fix the notation. The \emph{disc algebra} $\mathcal{A}$ is the space of all analytic functions on $\D$ which are continuous on the boundary $\T$.
For $0<\alpha\le 1$, the \emph{Lipschitz space} $\Lambda(\alpha)$ consists of
those $g\in\H(\D)$, having a non-tangential limit $g(e^{i\theta})$
almost everywhere on $\T$, such that
    $$
    \sup_{\t\in[0,2\pi],\,0<t<1}\frac{|g(e^{i(\theta+t)})-g(e^{i\theta})|}{t^\alpha}<\infty.
    $$
The \lq\lq little oh\rq\rq counterpart of this space is denoted by $\lambda(\alpha)$.
The following chain of strict inclusions is known:
    $$
    \lambda(\alpha)\subsetneq\Lambda(\alpha)\subsetneq \mathcal{A}\subsetneq H^\infty\subsetneq\BMOA\subsetneq \B,\quad 0<\alpha\le 1.
    $$
Here, as usual, $\B$ stands for the \emph{Bloch space} which consists of
those $f\in\H(\D)$ such that
    $\|f\|_{\mathcal{B}}=\sup_{z\in\D}|f'(z)|(1-|z|^2)+|f(0)|<\infty.$

\begin{theorem}\label{th:2}
Let $2<p<\infty$ and $g\in\H(\D)$.
\begin{itemize}
\item[\rm(i)] If $T_g:\Dp\to H^p$ is bounded, then $g\in\BMOA$.
\item[\rm(ii)] There exist $g\in\mathcal{A}$ and $f\in\Dp$
such that $T_g(f)\notin H^p$.
\end{itemize}
\end{theorem}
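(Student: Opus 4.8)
For part~(i), the plan is to show that boundedness of $T_g:\Dp\to H^p$ forces the Carleson-type condition
$\sup_{a\in\D}(1-|a|)^{-1}\int_{S(a)}|g'(z)|^2(1-|z|^2)\,dA(z)<\infty$, i.e.\ $g\in\BMOA$, by testing the operator on a family of functions concentrated near a boundary point. Fix $a\in\D$ close to $\T$, and consider the standard test function $f_a(z)=\bigl((1-|a|^2)/(1-\bar a z)^2\bigr)^{1/p}$, suitably normalized. One first checks $\|f_a\|_{\Dp}\lesssim 1$ uniformly in $a$: this is a routine computation using $f_a'(z)\asymp (1-|a|)^{1/p}(1-\bar a z)^{-2/p-1}$ together with the well-known integral estimates $\int_\D |1-\bar a z|^{-c}(1-|z|)^{b}\,dA(z)\asymp (1-|a|)^{b+2-c}$ valid for $c>b+2$; here $c=2/p+1$ and $b=p-1$, and since $p>2$ one has $c-b-2=2/p-p<0$ when $p>\sqrt2$, but in fact the exponent works out for all $p>2$ after the precise bookkeeping. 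From $\|T_g(f_a)\|_{H^p}\lesssim 1$ and the Fefferman--Stein formula~\eqref{fsest}, $T_g(f_a)'=f_a\,g'$ has $\int_\T S_{T_g(f_a)}^p\lesssim 1$, and restricting the Lusin area integral to the Stolz-angle portion lying over $S(a)$, where $|f_a(z)|\asymp (1-|a|)^{-1/p}$, yields $(1-|a|)^{-1}\int_{\widetilde S(a)}|g'|^2(1-|z|^2)\,dA\lesssim 1$; a Fubini/covering argument relates this tent-over-Stolz integral to the integral over $S(a)$ itself, giving $g\in\BMOA$.

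For part~(ii), the strategy is an explicit lacunary construction exploiting the strict inclusion $H^p\subsetneq\Dp$ from~\eqref{1}. Take $f(z)=\sum_k a_k z^{n_k}$ with a Hadamard gap sequence $n_k$, chosen so that $f\in\Dp$ but $f\notin H^p$: since for lacunary series $\|f\|_{H^p}\asymp\bigl(\sum_k|a_k|^2\bigr)^{1/2}$ while $\|f\|_{\Dp}^p\asymp \sum_k |a_k|^p n_k^{p-1}\big/ \text{(normalizing powers)}$—more precisely $\|f\|_{\Dp}\asymp\bigl(\sum_k |a_k|^2 n_k^{2/p-1}\bigr)^{1/2}$ by Littlewood--Paley for gap series—the gap $p>2$ makes these two conditions genuinely different, and one can pick $a_k, n_k$ with $\sum|a_k|^2 n_k^{2/p-1}<\infty$ but $\sum|a_k|^2=\infty$. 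Then choose $g\in\mathcal A$, for instance $g(z)=\sum_k b_k z^{m_k}$ on a sparser gap sequence with $\sum|b_k|<\infty$ (so $g$ is even in the Wiener algebra, hence in $\mathcal A$), arranged so that the Hadamard product structure of $f g'$ keeps enough mass: the key point is that $T_g(f)'=f g'$ and one engineers the frequencies so that $T_g(f)$ is again essentially lacunary with coefficient square-sum divergent, whence $T_g(f)\notin H^p$ by the lacunary $H^p$ characterization.

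The main obstacle is part~(ii): one must simultaneously keep $g$ in the small space $\mathcal A$ (which for lacunary symbols is close to demanding absolutely summable coefficients) while ensuring the convolution $f\mapsto fg'$ does not smooth $f$ into $H^p$. The cleanest route is probably to avoid a full Hadamard-product analysis and instead take $g$ with a single controlled defect—e.g.\ build $g$ so that $g'$ has a gap series with coefficients $b_k\asymp 1$ along a subsequence but $\sum|b_k| m_k^{-1}<\infty$ guaranteeing $g\in\mathcal A$—and then align the gaps of $f$ and $g$ so that $fg'$ reproduces a fixed multiple of a non-$H^p$ lacunary series. Verifying $g\in\mathcal A$ rigorously (not merely $g\in H^\infty$) will require either a Fejér-type partial-sum/continuity argument or citing that a gap series with $\sum|b_k|<\infty$ is in $\mathcal A$; verifying $T_g(f)\notin H^p$ reduces to the lacunary coefficient test. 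The norm estimate $\|f_a\|_{\Dp}\lesssim1$ in part~(i) is routine, and the passage from the Stolz-tent estimate to the Carleson-box estimate is standard, so I do not expect difficulty there.
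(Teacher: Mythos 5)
Part (i) of your argument is workable but does far more than necessary. Since $H^p\subsetneq\Dp$ for $p>2$ by \eqref{1}, boundedness of $T_g:\Dp\to H^p$ immediately gives boundedness of $T_g:H^p\to H^p$, and $g\in\BMOA$ then follows from the Aleman--Cima theorem; this is the paper's one-line proof of (i). Your direct test-function argument does go through for $p>2$: the passage from $\int_\T S_{T_g(f_a)}^p\,|d\z|\lesssim1$ to $\int_{I_a}S_{T_g(f_a)}^2\,|d\z|\lesssim(1-|a|)^{1-2/p}$ is H\"older with exponent $p/2\ge1$, which is precisely where $p>2$ is used (for $p<2$ H\"older goes the wrong way, which is why the proof of Theorem~\ref{th:1} needs the duality argument). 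So (i) is correct but redundant given \eqref{1}.

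Part (ii) has a genuine gap. Your plan is to make $T_g(f)'=fg'$ ``essentially lacunary'' and then invoke the coefficient test $\sum|b_k|^2=\infty\Rightarrow$ not in $H^p$. But the product of two infinite Hadamard-gap series is essentially never a gap series: the cross frequencies $n_k+m_j$ have ratios tending to $1$ no matter how the two gap sequences are interleaved, and collisions $n_k+m_j=n_{k'}+m_{j'}$ can in principle cause cancellation. So the lacunary test does not apply to $fg'$, and the proposal supplies no substitute mechanism for proving $T_g(f)\notin H^p$; this is the essential missing idea. (A direct Fourier-coefficient lower bound over non-colliding frequencies can be salvaged with care, but that is a different and more delicate argument than what you describe.) The paper's device is to bound $\|T_g(f)\|_{H^p}^2\ge\|T_g(f)\|_{H^2}^2\asymp\int_\D|f|^2|g'|^2(1-|z|^2)\,dA(z)$ and then \emph{decouple} $f$ from $g$ using Zygmund's distributional estimates for lacunary series --- $|f(re^{it})|>BM_2(r,f)$ on a set $E_r$ with $|E_r|\ge A$, and $\int_{E_r}|g'(re^{it})|^2\,dt\gtrsim M_2^2(r,g')$ for any such set --- which reduces everything to the radial integral $\int_0^1(1-r)M_2^2(r,f)M_2^2(r,g')\,dr$; this is made divergent by taking $f$ from Lemma~\ref{integralmeans}(ii) and $g(z)=\sum_j(j+1)^{-1}(\log(j+1))^{-\alpha}z^{2^{2^j}}\in\mathcal{A}$. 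Separately, your gap-series characterization of $\Dp$ is wrong: for $f=\sum_ka_kz^{n_k}$ with Hadamard gaps one has $\|f\|_{\Dp}^p\asymp\sum_k|a_k|^p$ (consistent with $\mathcal{D}^2_1=H^2$), not $\sum_k|a_k|^2n_k^{2/p-1}<\infty$; your condition would place $\sum_kz^{2^k}$ in $\Dp$ for every $p>2$, which is false. The existence of a lacunary $f\in\Dp\setminus H^p$ for $p>2$ does survive (since $\ell^p\supsetneq\ell^2$), so that error is repairable, but the absence of a working mechanism for $T_g(f)\notin H^p$ is not.
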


Part~(ii) shows that $\Dp$ is in a sense a much larger space than $H^p$ when $p>2$. This is true because we may choose the inducing symbol $g$ to be as smooth as continuous on the boundary, but still a suitably chosen $f\in\Dp$
establishes $T_g(f)\notin H^p$. In contrast to this, when the inducing index of the domain space is strictly smaller than the one of the target space, that is $p<q$, then $T_g$ does not distinguish between $\Dp$ and $H^p$.

\begin{theorem}\label{th:3}
Let $0<p<q<\infty$ and $g\in\H(\D)$.
\begin{enumerate}
\item[\rm(a)] If $\frac1p-\frac1q\le1$, then the following
are equivalent:
\begin{enumerate}
\item[\rm(i)] $T_g:\Dp\to H^q$ is bounded; \item[\rm(ii)]
$T_g:H^p\to H^q$ is bounded; \item[\rm(iii)]
$g\in\Lambda(\frac1p-\frac1q)$.
\end{enumerate}
\item[\rm(b)] If $\frac1p-\frac1q>1$, then $T_g:\Dp\to H^q$ is
bounded if and only if $g$ is constant.
\end{enumerate}
\end{theorem}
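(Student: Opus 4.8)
The plan is to prove Theorem~\ref{th:3} by treating the two indices separately and, in part~(a), exploiting the inclusion $\Dp\subsetneq H^p$ from \eqref{2} together with the known characterization of bounded $T_g:H^p\to H^q$ due to Aleman, Cima and Siskakis. The implication (ii)$\Rightarrow$(i) is immediate from \eqref{2}: every $f\in\Dp$ lies in $H^p$ with $\|f\|_{H^p}\lesssim\|f\|_{\Dp}$, so boundedness on the larger domain forces boundedness on the smaller one. The equivalence (ii)$\Leftrightarrow$(iii), in the regime $0<p<q<\infty$ with $\frac1p-\frac1q\le 1$, is exactly the Aleman--Cima--Siskakis result on $T_g:H^p\to H^q$ (which says precisely that $g\in\Lambda(\frac1p-\frac1q)$ is the sharp condition), so I would simply invoke it. The real content is therefore (i)$\Rightarrow$(iii): assuming only $T_g:\Dp\to H^q$ is bounded, I must still deduce $g\in\Lambda(\frac1p-\frac1q)$.

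For (i)$\Rightarrow$(iii) the natural route is to test the operator on a well-chosen family of functions in $\Dp$ and read off a growth estimate on $g'$. The standard test functions here are normalized reproducing-kernel-type functions $f_a(z)=\frac{(1-|a|^2)^{c}}{(1-\bar a z)^{b}}$ for $a\in\D$, with exponents $b,c$ chosen so that $\|f_a\|_{\Dp}\asymp 1$ uniformly in $a$; such functions are explicitly available since the $\Dp$-norm of these kernels is computable via standard integral asymptotics (the point being that because $p<q$ there is enough room for the exponents to be admissible). Applying the hypothesis to $f_a$ gives $\|T_g(f_a)\|_{H^q}\lesssim 1$, and then one estimates $\|T_g(f_a)\|_{H^q}$ from below. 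A clean way to do the lower estimate is to use $\|h\|_{H^q}\gtrsim |h'(a)|(1-|a|^2)^{1-1/q}$ type pointwise bounds, or more precisely to use that $(T_g f_a)'=f_a g'$ together with a Hardy-space lower bound for derivatives; combining these yields $|g'(a)|(1-|a|^2)^{1-1/q}\cdot|f_a(a)|\lesssim 1$, i.e. $|g'(a)|(1-|a|^2)^{1-(1/p-1/q)}\lesssim 1$, which is precisely the integral characterization $g\in\Lambda(\frac1p-\frac1q)$ since $0<\frac1p-\frac1q\le 1$ (recall $\Lambda(\alpha)$ is characterized by $|g'(z)|(1-|z|^2)^{1-\alpha}$ being bounded). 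The bookkeeping with the exponents, and checking that the test functions genuinely have bounded $\Dp$-norm, is the routine-but-delicate part.

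For part~(b), when $\frac1p-\frac1q>1$, the claim is that only constant $g$ work. I would argue by the same testing scheme: the family $f_a$ still has $\|f_a\|_{\Dp}\asymp 1$, and the same lower bound for $\|T_g(f_a)\|_{H^q}$ now forces $|g'(a)|(1-|a|^2)^{1-(1/p-1/q)}\lesssim 1$ with the exponent $1-(\frac1p-\frac1q)<0$; letting $|a|\to1$ this compels $g'(a)\to 0$ with a rate, and in fact iterating or testing more carefully (e.g. against functions with a higher-order zero or against polynomials to absorb the $f(0)$ term) one shows $g'\equiv 0$. Concretely: the exponent being negative means $|g'(a)|\lesssim (1-|a|^2)^{(1/p-1/q)-1}\to 0$ as $|a|\to 1$, and combined with the maximum principle / a normal-families argument on $g'$ (a bounded analytic function vanishing at the boundary is zero) we conclude $g$ is constant; conversely constants trivially give the zero operator. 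The main obstacle is engineering the test functions and the matching lower estimate on $\|T_g(f_a)\|_{H^q}$ so that the exponent arithmetic produces exactly $\frac1p-\frac1q$ and no slack — in particular one must be careful that the Hardy-space lower bound for the derivative of $T_g(f_a)$ is tight, which may require pairing against a linear functional or using that $S_{T_g f_a}$ is large on a boundary arc of length $\asymp 1-|a|$, in the spirit of the Fefferman--Stein argument already used for Theorem~\ref{th:1}.
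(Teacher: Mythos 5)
There is a genuine gap, and it sits exactly where the paper's proof does its real work. Your implication (ii)$\Rightarrow$(i) rests on the inclusion $\Dp\subset H^p$, but by \eqref{2} and \eqref{1} this inclusion holds only for $0<p\le2$; for $2<p<\infty$ the containment reverses, $H^p\subsetneq\Dp$, so $\Dp$ is the \emph{larger} space and boundedness of $T_g$ on $H^p$ tells you nothing about its boundedness on $\Dp$. Since part~(a) allows any $0<p<q<\infty$ with $\frac1p-\frac1q\le1$, the case $2<p<q$ is not covered by your argument at all: you prove (i)$\Rightarrow$(iii) and quote (iii)$\Leftrightarrow$(ii), but the loop is never closed back to (i) when $p>2$. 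That closure is the main content of the theorem in this range and cannot be obtained by inclusion of spaces.

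The paper closes the loop by proving (iii)$\Rightarrow$(i) directly for $2<p<\infty$. The argument is a square-function/duality computation: by the Fefferman--Stein estimate \eqref{fsest} one has $\|T_g(f)\|_{H^q}^q\asymp\int_\T S_{T_g(f)}^q\,|d\z|$, and since $q>2$ one may view $S_{T_g(f)}^2$ as an element of $L^{q/2}(\T)=\bigl(L^{\frac{q}{q-2}}(\T)\bigr)^\star$ and pair it against $h\in L^{\frac{q}{q-2}}(\T)$. Fubini converts the pairing into $\int_\D|f(z)|^2M(|h|)(z)|g'(z)|^2(1-|z|^2)\,dA(z)$, where $M$ is the H\"ormander-type maximal function; H\"older then splits this into a factor controlled by the Carleson embedding of $\Dp$ into $L^{2+p-\frac{2p}{q}}$ of the measure $|g'(z)|^2(1-|z|^2)\,dA(z)$ (Theorem~\ref{pr:dpq}, using that this measure is a $\bigl(2(\tfrac1p-\tfrac1q)+1\bigr)$-Carleson measure by Lemma~\ref{le:2}(iii) --- this is exactly where hypothesis (iii) enters) and a factor controlled by the maximal-function theorem (Theorem~\ref{co:maxbouhp}). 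Your testing argument for (i)$\Rightarrow$(iii) and your treatment of part~(b) are essentially the paper's Lemma~\ref{le:1} combined with Lemma~\ref{le:2} and the maximum principle, and those parts are sound; but without the duality step above your proposal only proves the theorem for $p\le2$.
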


Part (a) allows us to deduce a strengthened version of the classical result of Hardy-Littlewood which states that
a primitive of each function $f\in H^p$, $0<p<1$, belongs to $H^\frac{p}{1-p}$.

\begin{proposition}\label{factorization}
Let $p, \,p_1\, \mbox{and } \, p_2$ be positive numbers such that $p
<1 < p_2$ and $\frac{1}{p} = \frac{1}{p_1} + \frac{1}{p_2}$. If
$f\in\H(\D)$ such that $f = f_1 \cdot f_2$
where $f_1\in\mathcal{D}^{p_1}_{p_1-1}$ and  $f_2 \in \H(\D)$ satisfies $|f_2(z)| =\og
\left( \frac{1}{(1 - |z|)^{1/p_2}} \right)$, then $f$ is the
derivative of a function in $H^{\frac{p}{1 - p}}$.
\end{proposition}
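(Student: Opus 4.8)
The plan is to exhibit the primitive of $f$ as $T_g(f_1)$ for a carefully chosen symbol $g$, and then to read off its membership in $H^{p/(1-p)}$ from Theorem~\ref{th:3}(a). Set $q=\frac{p}{1-p}$, so that $\frac1q=\frac1p-1$, and let $g(z)=\int_0^z f_2(\zeta)\,d\zeta$, which is analytic on $\D$ because $f_2\in\H(\D)$. Then $T_g(f_1)$ is analytic on $\D$, vanishes at the origin, and, since $\big(T_g(h)\big)'=h\,g'$,
\[
\big(T_g(f_1)\big)'(z)=f_1(z)\,g'(z)=f_1(z)\,f_2(z)=f(z),\qquad z\in\D .
\]
Hence it suffices to prove that $T_g(f_1)\in H^q$.

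The next step is the index bookkeeping needed to apply Theorem~\ref{th:3}(a) with the pair $(p_1,q)$ in place of $(p,q)$. Since $p<1<p_2$ we have $p<p_2$, so $\frac1{p_1}=\frac1p-\frac1{p_2}>0$ and $p_1\in(0,\infty)$ is well defined. Moreover
\[
\frac1{p_1}-\frac1q=\Big(\frac1p-\frac1{p_2}\Big)-\Big(\frac1p-1\Big)=1-\frac1{p_2}=:\alpha ,
\]
and $p_2>1$ forces $\alpha\in(0,1)$. In particular $\frac1{p_1}>\frac1q$, i.e.\ $0<p_1<q<\infty$, and $\frac1{p_1}-\frac1q=\alpha\le 1$, so Theorem~\ref{th:3}(a), applied with exponents $p_1$ and $q$, gives that $T_g\colon\mathcal{D}^{p_1}_{p_1-1}\to H^q$ is bounded if and only if $g\in\Lambda(\alpha)$.

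It remains to verify the hypothesis $g\in\Lambda(\alpha)$. By construction $g'=f_2$, so the assumed growth of $f_2$ says precisely that $|g'(z)|=\og\big((1-|z|)^{-1/p_2}\big)=\og\big((1-|z|)^{\alpha-1}\big)$. Since $0<\alpha<1$, the classical Hardy--Littlewood description of the analytic Lipschitz classes (see, e.g.,~\cite{Duren1970}) asserts that this growth of $g'$ is equivalent to $g\in\Lambda(\alpha)$. Therefore $T_g\colon\mathcal{D}^{p_1}_{p_1-1}\to H^q$ is bounded, and since $f_1\in\mathcal{D}^{p_1}_{p_1-1}$ by hypothesis we obtain $T_g(f_1)\in H^q$. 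As $\big(T_g(f_1)\big)'=f$ and $q=\frac{p}{1-p}$, the function $f$ is the derivative of a function in $H^{p/(1-p)}$, which is the assertion.

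As for where the difficulty lies: essentially all of it has been outsourced to Theorem~\ref{th:3}(a). Once that theorem is available the present proposition is just the bookkeeping above together with the Hardy--Littlewood Lipschitz lemma, so there is no separate analytic obstacle; the only point requiring a little care is checking that the exponents $p_1,q$ do satisfy $0<p_1<q<\infty$ and $\frac1{p_1}-\frac1q\le1$, which is where the hypotheses $p<1<p_2$ are used. (Should one wish to avoid quoting Theorem~\ref{th:3}, the estimate needed for $T_g(f_1)$ is exactly the one behind the implication (iii)$\Rightarrow$(i) of that theorem.)
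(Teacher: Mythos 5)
Your proof is correct and follows essentially the same route as the paper's: define the primitive $g$ of $f_2$ (the paper calls it $F_2$), use the Hardy--Littlewood characterization (Lemma~\ref{le:2}) to place $g$ in $\Lambda(1-\tfrac{1}{p_2})$, and apply Theorem~\ref{th:3}(a) with exponents $p_1$ and $q=\tfrac{p}{1-p}$ to conclude $T_g(f_1)\in H^{p/(1-p)}$. Your version is in fact slightly more complete, since you verify explicitly the index conditions $0<p_1<q<\infty$ and $\tfrac1{p_1}-\tfrac1q\le1$, which the paper leaves implicit.
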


The statement in Proposition~\ref{factorization} with $H^{p_1}$ in place of $\mathcal{D}^{p_1}_{p_1-1}$ was proved by
Aleman and Cima \cite[p.~158]{AC}. The strict inclusions \eqref{2} and \eqref{1} show that their result is better when $p_1<2$ meanwhile the situation is another way round when $p_1>2$.

An important ingredient in the proofs of both Theorems~\ref{th:1} and \ref{th:3} is the following result on a H\"ormander-type maximal function
    $$
    M(\vp)(z)=\sup_{I:\,z\in S(I)}\frac{1}{|I|}\int_{I}|\vp(\z)|\,\frac{|d\z|}{2\pi},\quad
    z\in\D,
    $$
defined for each $2\pi$-periodic function $\vp(e^{i\t})\in L^1(\T)$

\begin{lettertheorem}\label{co:maxbouhp}
Let $0<p\le q<\infty$ and $0<\alpha<\infty$ such that $p\alpha>1$.
Let $\mu$ be a positive Borel measure on $\D$. Then
$[M((\cdot)^{\frac{1}{\alpha}})]^{\alpha}:L^p(\T)\to L^q(\mu)$ is
bounded if and only if there exists a constant $C>0$ such that $\mu(S(I))\le C|I|^\frac{q}{p}$ for all $I\subset\T$
Moreover,
    $$
    \|[M((\cdot)^{\frac{1}{\alpha}})]^{\alpha}\|^q\asymp\sup_{I\subset\T}\frac{\mu\left(S(I) \right)}{|I|^\frac{q}p}.
    $$
\end{lettertheorem}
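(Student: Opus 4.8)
The plan is to prove the two implications separately, exploiting the standard duality between Carleson-square conditions and maximal-type estimates. First I would dispose of the necessity direction, which is the easy one: testing $[M((\cdot)^{1/\alpha})]^\alpha: L^p(\T)\to L^q(\mu)$ on the concrete functions $\varphi_I=\chi_I$ (or a smooth analogue) for a fixed arc $I\subset\T$, one has $\|\varphi_I\|_{L^p(\T)}^p=|I|$, while for every $z\in S(I)$ the arc $I$ itself is admissible in the supremum defining $M$, so that $M((\varphi_I)^{1/\alpha})(z)\gtrsim (|I|^{-1}\int_I 1\,|d\z|/2\pi)=|I|^{-1}\cdot|I|/(2\pi)$ — more precisely one gets $[M((\chi_I)^{1/\alpha})(z)]^\alpha\gtrsim 1$ uniformly on $S(I)$ — so boundedness forces $\mu(S(I))^{1/q}\lesssim \|[M((\cdot)^{1/\alpha})]^\alpha\|\,|I|^{1/p}$, i.e. $\mu(S(I))\lesssim \|\cdot\|^q|I|^{q/p}$. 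This simultaneously gives the lower bound $\sup_I \mu(S(I))/|I|^{q/p}\lesssim \|[M((\cdot)^{1/\alpha})]^\alpha\|^q$ in the norm equivalence.

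For sufficiency, suppose $\mu(S(I))\le C|I|^{q/p}$ for all $I$. Writing $\psi=|\varphi|^{1/\alpha}\in L^{p\alpha}(\T)$, the statement is equivalent to boundedness of $M:L^{p\alpha}(\T)\to L^{q\alpha}(\mu)$ with the Carleson condition $\mu(S(I))\le C|I|^{(q\alpha)/(p\alpha)}$; so setting $s=p\alpha>1$ and $t=q\alpha\ge s$, I am reduced to the clean statement: if $\mu(S(I))\le C|I|^{t/s}$ for all $I$, then $M:L^s(\T)\to L^t(\mu)$ is bounded with norm $\lesssim C^{1/t}$. The route I would take is the classical one via distribution functions and the weak-type behaviour of the Hörmander/Hardy–Littlewood maximal function. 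For $\lambda>0$, the set $\{z\in\D: M(\varphi)(z)>\lambda\}$ can be covered by Carleson squares $S(I_j)$ coming from a covering (Vitali-type / Calderón–Zygmund stopping-time) argument applied to the boundary maximal function, with $\sum_j |I_j|\lesssim \lambda^{-1}\int_{\{M_\T\varphi>c\lambda\}}|\varphi|\,|d\z|\lesssim \lambda^{-s}\|\varphi\|_{L^s(\T)}^s$ in the relevant regime, where $M_\T$ is the ordinary one-dimensional maximal function. Applying the Carleson condition on each $S(I_j)$ gives $\mu(\{M\varphi>\lambda\})\le \sum_j\mu(S(I_j))\le C\sum_j|I_j|^{t/s}\le C\big(\sum_j|I_j|\big)^{t/s}$ (here $t/s\ge1$, so $\ell^1\hookrightarrow\ell^{t/s}$ is used in the right direction), and then one integrates $t\lambda^{t-1}\mu(\{M\varphi>\lambda\})\,d\lambda$. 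Because the bound on $\sum_j|I_j|$ involves a truncated $L^s$ mass of $\varphi$ over the level set, a standard layer-cake computation (or a direct appeal to a Marcinkiewicz-type interpolation between the weak $(s,s)$ estimate for $M_\T$ and the trivial $L^\infty$ bound) yields $\|M\varphi\|_{L^t(\mu)}\lesssim C^{1/t}\|\varphi\|_{L^s(\T)}$, giving the upper bound in the norm equivalence as well.

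The one point that genuinely needs care — and which I expect to be the main obstacle — is the covering/stopping-time lemma that transfers the one-dimensional weak-type information about $M_\T\varphi$ on $\T$ into a statement about the level sets of the Hörmander maximal function $M\varphi$ inside $\D$ expressed as a union of Carleson squares with controlled total boundary length. This is where the geometry of the admissible arcs in the definition of $M$ enters, and one must make sure the chosen squares genuinely cover $\{M\varphi>\lambda\}$ and not merely a comparable set; a Whitney-type decomposition of the open set $\{e^{i\theta}: M_\T\varphi(e^{i\theta})>c\lambda\}\subset\T$ into maximal dyadic-like arcs, together with the elementary observation that $z\in S(I)$ with $(|I|^{-1}\int_I|\varphi|)$ large forces a point of $I$ to have large $M_\T\varphi$, is the mechanism. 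Once this is in place the rest is the routine distribution-function estimate, and keeping track of the constants there gives the stated asymptotic equality $\|[M((\cdot)^{1/\alpha})]^\alpha\|^q\asymp \sup_I \mu(S(I))/|I|^{q/p}$.
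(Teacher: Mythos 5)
Your proof is correct. Note first that the paper does not actually prove Theorem~B: it is stated as a lettertheorem, i.e.\ a known result attributed to Carleson, with Duren's book (Section~9.5) cited for a modern account, and what you have written is essentially the classical argument found in those sources. The necessity part via the test functions $\chi_I$ is fine (for $z\in S(I)$ the arc $I$ itself is admissible in the supremum, so $[M(\chi_I^{1/\alpha})(z)]^{\alpha}\ge1$), and the sufficiency reduction to the boundedness of $M\colon L^s(\T)\to L^t(\mu)$ with $s=p\alpha>1$, $t=q\alpha\ge s$ under $\mu(S(I))\le C|I|^{t/s}$ is the right normalization and yields the stated norm equivalence after raising to the power $q$. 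The covering step you flag as delicate does work exactly as you describe: if $M\varphi(z)>\lambda$ there is an arc $I\ni e^{i\arg z}$ with $|I|\ge 1-|z|$ whose average of $|\varphi|$ exceeds $\lambda$, so every point of $I$ has uncentered boundary maximal function $M_\T\varphi>\lambda$; hence $I$ lies in a component arc $I_j$ of $O_\lambda=\{M_\T\varphi>\lambda\}$ and $z\in S(I)\subset S(I_j)$, giving $\{M\varphi>\lambda\}\subset\bigcup_j S(I_j)$ with $\sum_j|I_j|=|O_\lambda|$. The one point to spell out in a full write-up is the final integration: substituting the weak $(s,s)$ bound directly into $\int_0^\infty\lambda^{t-1}|O_\lambda|^{t/s}\,d\lambda$ produces a divergent $\int\lambda^{-1}\,d\lambda$, so one must write $|O_\lambda|^{t/s}=|O_\lambda|^{t/s-1}|O_\lambda|\lesssim\bigl(\lambda^{-s}\|\varphi\|_{L^s(\T)}^s\bigr)^{t/s-1}|O_\lambda|$ and then integrate $\lambda^{s-1}|O_\lambda|$ using the strong $(s,s)$ inequality for $M_\T$ --- which is precisely where the hypothesis $p\alpha>1$ enters. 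Your parenthetical appeal to the truncated mass and Marcinkiewicz-type interpolation covers this, so the remaining gap is only expository.
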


This result follows by the well-known works by Carleson~\cite{CarlesonL58,CarlesonL62}, and hence the measures $\mu$ for which $\mu(S(I))\le C|I|^\frac{q}{p}$ are known as \emph{$\frac{q}{p}$-Carleson measures}.
For more recent references, see either
\cite[Section~9.5]{Duren1970}, or the proof of \cite[Theorem~2.1]{PelRat} for a similar result. Theorem~\ref{co:maxbouhp} has been used to characterize so-called $q$-Carleson measures for Hardy spaces. Recall that, for a given Banach space (or a complete metric
space) $X$ of analytic functions on $\D$, a positive Borel measure
$\mu$ on $\D$ is called a \emph{$q$-Carleson measure for $X$}
if the identity operator $I_d:\, X\to L^q(\mu)$ is bounded. In nowadays these measures are a standard tool in the operator theory in spaces of analytic functions in $\D$.


Let us now turn back to the two remaining cases that are not covered by Theorems~\ref{th:1} and \ref{th:2}. They are the ones in which the operator $T_g$ acts from either $H^p$ or $\Dp$ to $\Dp$. It is easy to see that, in terms of the language of the previous paragraph, $T_g:H^p\to \Dq$ is bounded if and only if $\mu_{g,q}=|g'(z)|^q(1-|z|^2)^{q-1}\,dA(z)$
is a $q$-Carleson measure for $H^p$. Therefore, in this case the symbols $g$ that induce bounded operators get characterized by \cite[Theorem~9.5]{Duren1970}, when $q\ge p$, and \cite{Lu91} if $q<p$. Analogously, it follows that $T_g:\Dp\to \Dq$ bounded if and only if
$\mu_{g,q}$ is a $q$-Carleson measure for~$\Dp$. Unfortunately, as far as we know, the existing literature does not offer a characterization of these measures for the full range of parameter values in terms of a condition depending on $\mu$ only. It is known that they coincide with $q$-Carleson measures for $H^p$ and can therefore be described by the condition
    \begin{equation}
    \label{Carl.meas}
    \sup_{I\subset\T}\frac{\mu\left(S(I)\right)}{|I|^{q/p}}<\infty,
    \end{equation}
provided $q>p$~\cite[Theorem~1(a)]{GP:JFA06}. This statement remains valid also in the diagonal case $q=p$, if $p\le2$, but fails for $p>2$ \cite{GP:IE06,Wu}. In more general terms, the $p$-Carleson measures for $\Dpa$ are known excepting the case $\alpha=p-1$
for $p>2$~\cite{ARS,Wu}. This corresponds to the diagonal case $q=p>2$ which interests us in particular. What is known with respect to this case, is that $\mu$ being a 1-Carleson measure is a necessary but not a sufficient
condition for $\mu$ to be a $p$-Carleson measure for
$\mathcal{D}^p_{p-1}$~\cite{GP:IE06}, and that
the more restrictive condition
    $$
    \sup_{I\subset\T}\frac{\mu\left(S(I)\right)}{|I|\left(\log\frac{e}{|I|}\right)^{-p/2}}<\infty
    $$
is a sufficient condition for $I_d:\Dp\to L^p(\mu)$ to be bounded~\cite{GPP}. Our next result shows that this best known sufficient condition can be relaxed by one logarithmic term.

\begin{theorem}\label{th:cmdp}
Let $2<p<\infty$, and let $\mu$ be a positive Borel measure on
$\D$. If
    \begin{equation}\label{cardp}
    \sup_{I\subset\T}\frac{\mu\left(S(I)\right)}{|I|\left(\log\frac{e}{|I|}\right)^{-p/2+1}}<\infty,
    \end{equation}
then $\mu$ is a $p$-Carleson measure for $\Dp$.
\end{theorem}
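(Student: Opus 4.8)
The plan is to reduce the $p$-Carleson embedding for $\Dp$, $p>2$, to an $L^2$-estimate via the square function and the Fefferman--Stein philosophy, and then to deploy Theorem~\ref{co:maxbouhp} on the H\"ormander-type maximal operator together with the hypothesis \eqref{cardp} on $\mu$. First I would recall that for $f\in\Dp$ the function $f'$ lies in the weighted Bergman-type context and, by a standard area-function comparison, the boundary growth of $f$ is controlled: one has a pointwise nontangential estimate relating $|f(z)|$ to a maximal function of a suitable boundary quantity. More precisely, writing $\varphi=S_f$ for the square function appearing in \eqref{fsest}, or a variant adapted to the Dirichlet norm, one gets $\|\varphi\|_{L^p(\T)}\lesssim\|f\|_{\Dp}$ by the Littlewood--Paley/Fefferman--Stein identity (for $p\le 2$ the direction $\Dp\subset H^p$ is \eqref{2}; for $p>2$ one uses that $\|f\|_{H^p}\lesssim\|f\|_{\Dp}$ fails, so instead one works directly with the Dirichlet integral and a Luzin-type area function built from $(1-|z|)^{p-1}|f'(z)|^p$).

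The heart of the argument: I would show that $\int_\D|f(z)|^p\,d\mu(z)$ is dominated by $\int_\T [M(\varphi)]^p\,|d\zeta|$ against the measure $\mu$ pulled back to the boundary through Carleson boxes, plus the contribution of $|f(0)|$. The bound $|f(z)|\lesssim$ (a maximal average over $I$ with $z\in S(I)$ of a boundary function) is the standard device; raising to the $p$-th power and integrating $d\mu$, one wants $[M((\cdot)^{1/\alpha})]^\alpha:L^p(\T)\to L^p(\mu)$ bounded with $\alpha$ chosen so that $p\alpha>1$. By Theorem~\ref{co:maxbouhp} (with $q=p$) this boundedness is \emph{equivalent} to $\mu$ being a $1$-Carleson measure, i.e. $\mu(S(I))\lesssim|I|$. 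But \eqref{cardp} is \emph{stronger} than that, so naively Theorem~\ref{co:maxbouhp} alone would already give the embedding from the $1$-Carleson condition — which is false (it is known to be necessary but not sufficient). Hence the real content is that one must \emph{not} throw away the logarithmic gain in \eqref{cardp}: the pointwise majorant for $f\in\Dp$ is not a plain maximal function of an $L^p$ boundary datum, but carries an extra logarithmic smallness, precisely because $\Dp\subsetneq H^p$ fails and the area function has a borderline behavior. The right statement is a weighted/logarithmically-refined maximal inequality: $|f(z)|^p$ is controlled, up to the factor $(\log\frac{e}{1-|z|})^{-p/2+1}$ roughly, by $M$ of something in $L^p(\T)$, so that testing against $\mu$ one needs exactly $\mu(S(I))/(|I|(\log\frac{e}{|I|})^{-p/2+1})$ bounded.

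Concretely I would: (1) fix a test function $f\in\Dp$ with $\|f\|_{\Dp}\le 1$ and split $\D$ into dyadic annuli $A_n=\{1-2^{-n}\le|z|<1-2^{-n-1}\}$; (2) on each $A_n$ use the subharmonicity/area estimate to bound $|f(z)|^p$ by an average of $|f'|^p(1-|z|)^{p-1}$ over a slightly larger region, times the appropriate normalization, invoking the improvement in the recent literature \cite{GPP} but squeezing one extra logarithm; (3) sum in $n$ against $\mu(A_n\cap S(I))$, using \eqref{cardp} to absorb the $(\log\frac{e}{|I|})^{-p/2+1}$ weight and leaving a convergent series in $n$ because the remaining power of $\log\sim n$ is summable after the gain (this is where the exponent $-p/2+1$ rather than $-p/2$ is exactly what makes the $\sum_n n^{-p/2+1}$-type sum — or its dyadic analogue — behave); (4) conclude $\int_\D|f|^p\,d\mu\lesssim 1$, i.e. $I_d:\Dp\to L^p(\mu)$ is bounded. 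I expect the main obstacle to be step (2)–(3): establishing the logarithmically-refined pointwise/maximal estimate with the \emph{sharp} power $p/2-1$ of the logarithm, which requires tracking constants in the area-function comparison and in a Carleson-type box decomposition carefully rather than using the estimate of \cite{GPP} as a black box; the summation in $n$ must be arranged so that the logarithmic factor from \eqref{cardp} precisely cancels the divergent logarithmic factor coming from the failure of the plain embedding, with nothing to spare.
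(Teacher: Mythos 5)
There is a genuine gap. You correctly diagnose that a direct application of Theorem~\ref{co:maxbouhp} with $q=p$ only exploits the $1$-Carleson condition, which is known to be insufficient for $p>2$, and you correctly guess that the missing ingredient is a logarithmically refined estimate with the exact exponent $p/2-1$. But that refined estimate is precisely the whole content of the theorem, and you never establish it: you defer it to steps (2)--(3) and explicitly flag it as ``the main obstacle''. Worse, the mechanism you propose for it cannot close. A dyadic--annular decomposition combined with the radial growth bound $M_p^p(r,f)\lesssim\left(\log\frac{e}{1-r}\right)^{\frac{p}{2}-1}\|f\|_{\Dp}^p$ (Lemma~\ref{integralmeans}(i)) gives, for the $n$-th annulus $A_n$, the bound $\mu(A_n)\lesssim 2^n\cdot 2^{-n}\left(\log(e2^n)\right)^{1-\frac{p}{2}}\asymp n^{1-\frac{p}{2}}$ from \eqref{cardp}, and hence a majorant of the form $\sum_n n^{\frac{p}{2}-1}\cdot n^{1-\frac{p}{2}}=\sum_n 1=\infty$. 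The radial information is exactly borderline; one must localize in the angular variable via a genuine Carleson-box/tent decomposition and a boundary maximal function adapted to the weight, which is a substantial argument and not a matter of ``tracking constants'' in \cite{GPP}.

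For comparison, the paper's proof is a two-line reduction: by \eqref{dpvp2} (which is \cite[Theorem~1.4]{GPP}) one has the continuous embedding $\Dp\subset A^p_{v_{p/2}}$, where $v_{p/2}(z)=(1-|z|)^{-1}\left(\log\frac{e}{1-|z|}\right)^{-p/2}$, and then \cite[Theorem~2.1]{PelRat} characterizes the $p$-Carleson measures for $A^p_{v_{p/2}}$ by the condition $\mu(S(I))\lesssim\int_{S(I)}v_{p/2}\,dA\asymp|I|\left(\log\frac{e}{|I|}\right)^{1-\frac{p}{2}}$, which is exactly \eqref{cardp}. The hard maximal-function work you are gesturing at is thus outsourced to the cited characterization of Carleson measures for Bergman spaces with rapidly increasing weights; if you want a self-contained proof you would have to reproduce that argument, and your sketch as written does not do so.
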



We will see in Proposition~\ref{pr:sharp} that the statement in Theorem~\ref{th:cmdp} is sharp in a very strong sense.

The remaining part of the paper is organized as follows. In Section \ref{pre} we state and
prove some preliminary results. Theorems~\ref{th:1} and~\ref{th:3} and their expected analogues for compact operators as well as Proposition~\ref{factorization} are proved in Section~\ref{intoper}. In Section~\ref{sec:intmeans} we will deal with the growth of integral means of functions $f\in\Dp$, $p>2$, and we will prove Theorem~\ref{th:2}.

Before proceeding further, a word about notation to be used. We will write $\|T\|_{(X,Y)}$ for the norm of
an operator $T:X\to Y$, and if no confusion arises with regards to
$X$ and $Y$, we will simply write $\|T\|$. Moreover, for two real-valued functions $E_1,E_2$ we write $E_1\asymp
E_2$ or $E_1\lesssim E_2$, if there exists a positive constant $k$,
independent of the argument, such that $\frac{1}{k} E_1\leq E_2\leq k
E_1$ or $E_1\leq k E_2$, respectively.

\section{Preliminaries}\label{pre}

We begin with a straightforward but useful estimate that will be used in proofs of Theorems~\ref{th:1} and~\ref{th:3}.

\begin{lemma}\label{le:1}
Let $0<q,p<\infty$ and $g\in\H(\D)$. If $T_g:\Dp\to H^q$ is
bounded, then
    \begin{equation}\label{eq:nec1}
    M_\infty(r,g')\lesssim\frac{\|T_g\|_{(\Dp,
    H^q)}}{(1-r)^{1-\frac{1}{p}+\frac{1}{q}}},\quad 0\le r<1.
    \end{equation}
\end{lemma}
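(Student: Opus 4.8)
The plan is to exploit the boundedness of $T_g:\Dp\to H^q$ by testing it on a normalized family of functions that concentrates near a fixed boundary point, together with a pointwise lower bound for the $H^q$-norm of $T_g(f)$ in terms of the value of $(T_g f)'=f\,g'$ at that point.

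First I would fix $r\in[0,1)$ and a point $a\in\D$ with $|a|=r$, and choose a test function $f_a\in\Dp$ of the standard form $f_a(z)=\left(\frac{1-|a|^2}{(1-\overline a z)^2}\right)^{s}$ for a suitable exponent $s=s(p)>0$; a direct computation with the change of variables $w=\varphi_a(z)$ (or with the standard integral estimates for $\int_\D(1-|z|^2)^{b}|1-\overline a z|^{-c}\,dA(z)$, e.g. \cite[]{} -- here one uses only elementary estimates) shows $\|f_a\|_{\Dp}\lesssim 1$ while $|f_a(a)|\asymp (1-|a|)^{-1/p}$. Hence $\|T_g(f_a)\|_{H^q}\le \|T_g\|_{(\Dp,H^q)}\|f_a\|_{\Dp}\lesssim \|T_g\|_{(\Dp,H^q)}$.

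Next I would produce the matching lower bound. Since $(T_g f_a)'(z)=f_a(z)g'(z)$, I would use the well-known pointwise estimate for analytic functions in Hardy spaces, namely $|h'(a)|(1-|a|)^{1/q+1}\lesssim\|h\|_{H^q}$ (which follows from $|h(a)|\lesssim(1-|a|)^{-1/q}\|h\|_{H^q}$ applied to $h'$, or from the subordination/Cauchy-type estimates together with $\|h'\|_{H^q\!-\text{type}}$ bounds). Applying this with $h=T_g(f_a)$ gives
    \[
    |f_a(a)|\,|g'(a)|\,(1-|a|)^{1+\frac1q}\lesssim \|T_g(f_a)\|_{H^q}\lesssim \|T_g\|_{(\Dp,H^q)}.
    \]
Combining this with $|f_a(a)|\asymp(1-|a|)^{-1/p}$ yields $|g'(a)|\lesssim \|T_g\|_{(\Dp,H^q)}(1-|a|)^{-1-\frac1q+\frac1p}$; taking the supremum over $|a|=r$ gives \eqref{eq:nec1}.

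The main obstacle — really the only nontrivial point — is verifying the two estimates $\|f_a\|_{\Dp}\lesssim 1$ and $|f_a(a)|\asymp(1-|a|)^{-1/p}$ simultaneously, i.e. choosing the exponent $s$ correctly so that the Dirichlet-type integral $\int_\D |f_a'(z)|^p(1-|z|)^{p-1}\,dA(z)$ converges with a bound independent of $a$ while the normalization at $a$ has the right size. Since $|f_a'(z)|\asymp |a|(1-|a|^2)^{s}|1-\overline a z|^{-2s-1}$, the integral is comparable to $(1-|a|)^{ps}\int_\D(1-|z|)^{p-1}|1-\overline a z|^{-p(2s+1)}\,dA(z)$, and the standard estimate for such integrals requires $p(2s+1)-2-(p-1)=p(2s+1)-p-1>0$ and then yields size $(1-|a|)^{ps}\cdot(1-|a|)^{p+1-p(2s+1)}=(1-|a|)^{1-ps}$... so one must in fact also allow $f_a$ itself (not $f_a'$) to carry part of the weight, or equivalently pick $s$ so that $ps=1$, giving both $\|f_a\|_{\Dp}^p\asymp 1+(1-|a|)^{1-ps}\asymp1$ and $|f_a(a)|=(1-|a|^2)^{-s}\asymp(1-|a|)^{-1/p}$. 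Once $s=1/p$ is fixed the rest is the routine chain of inequalities above.
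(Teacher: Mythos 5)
Your argument is correct and is essentially the paper's own proof: the authors test $T_g$ on the same family of power-type functions concentrated at $a$ (normalized instead so that $F_{a,p,\g}(a)=1$ and $\|F_{a,p,\g}\|_{\Dp}^p\asymp 1-|a|$, which is just a rescaling of your $f_a$ with $s=1/p$), and then use the standard growth relations $M_\infty(r,h)\lesssim M_q\left(\tfrac{1+r}{2},h\right)(1-r)^{-1/q}$ and $M_q(r,h')\lesssim M_q\left(\tfrac{1+r}{2},h\right)(1-r)^{-1}$ to convert $\|T_g(f_a)\|_{H^q}\lesssim\|T_g\|$ into the pointwise bound on $|g'(a)|$. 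The only point to tidy is the justification of $|h'(a)|\lesssim\|h\|_{H^q}(1-|a|)^{-1-1/q}$: it should come from the Cauchy/mean-value route you mention second (as in the paper, via the two $M_q$ relations), not from applying the $H^q$ growth estimate directly to $h'$, since $h'$ need not belong to $H^q$.
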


\begin{proof}
The functions
    $$
    F_{a,p,\g}(z)=\left(\frac{1-|a|^2}{1-\overline{a}z}\right)^{\frac{1+\g}{p}},\quad 0<\g<\infty,\quad
    a\in\D,
    $$
satisfy
    \begin{equation}\label{0}
    |F_{a,p,\g}(z)|\asymp 1,\quad z\in S(a),
    \end{equation}
and a calculation shows that
shows that
    $$
    \|F_{a,p,\g}\|^p_{\Dp}\asymp (1-|a|),\quad a\in\D.
    $$
Since $T_g:\Dp\to H^q$ is bounded by the assumption, the well
known relations $M_\infty(r,f)\lesssim
M_q\left(\frac{1+r}{2},f\right)(1-r)^{-\frac1q}$ and
$M_q(r,f')\lesssim M_q\left(\frac{1+r}{2},f\right)(1-r)^{-1}$,
valid for all $f\in\H(\D)$ (see \cite[Chapter $5$]{Duren1970}), yield
    \begin{equation*}
    \begin{split}
    |g'(a)|&=|(T_g(F_{a,p,\g}))'(a)|\lesssim \frac{M_q\left(\frac{1+|a|}{2},(T_g(F_{a,p,\g}))'\right)}{(1-|a|)^{\frac1q}}\\
    &\lesssim \frac{M_q\left(\frac{3+|a|}{4},T_g(F_{a,p,\g})\right)}{(1-|a|)^{1+\frac1q}}
    \lesssim \frac{\|T_g(F_{a,p,\g})\|_{H^q}}{(1-|a|)^{1+\frac1q}}\\
    &\lesssim \frac{\|T_g\|_{(\Dp,H^q)}\|F_{a,p,\g}\|_{\Dp}}{(1-|a|)^{1+\frac1q}}
    \lesssim \frac{\|T_g\|_{(\Dp,H^q)}}{(1-|a|)^{1+\frac1q-\frac{1}{p}}},\quad
    a\in\D,
    \end{split}
    \end{equation*}
and the assertion follows.
\end{proof}

We next recall some suitable reformulations of Lipschitz spaces $\Lambda (\alpha)$~\cite{Duren1970}.

\begin{letterlemma}\label{le:2}
Let $0<\alpha\le 1$. Then the following are equivalent:
\begin{itemize}
\item[\rm(i)] $g\in\Lambda (\alpha)$;
\item[\rm(ii)] $M_\infty(r,g')=\og\left(\frac{1}{(1-r)^{1-\a}}\right),\quad r\to 1^-$;
\item[\rm(iii)] $d\mu_g(z)=|g'(z)|^2(1-|z|^2)\,dA(z)$ satisfies
        $\displaystyle
        \sup_{I\subset \T}\frac{\mu_g\left(S(I)\right)}{|I|^{2\a+1}}<\infty.
        $
\end{itemize}
\end{letterlemma}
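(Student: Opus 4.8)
The plan is to prove the two equivalences (i)$\Leftrightarrow$(ii) and (ii)$\Leftrightarrow$(iii), with (ii) serving as a convenient bridge. For (i)$\Rightarrow$(ii) I would invoke the classical reformulation that $g\in\Lambda(\alpha)$ is equivalent to $|g(z)-g(w)|\lesssim|z-w|^\alpha$ for all $z,w\in\ov\D$. Writing $g'(z)$ by the Cauchy formula over the circle $|\zeta-z|=\frac{1-|z|}{2}$ and subtracting the constant $g(z)$ from the numerator (permissible because the integral of $(\zeta-z)^{-2}$ over that circle vanishes), one obtains
$$
|g'(z)|\lesssim\frac{1}{1-|z|}\max_{|\zeta-z|=(1-|z|)/2}|g(\zeta)-g(z)|\lesssim\frac{1}{1-|z|}\,(1-|z|)^\alpha=(1-|z|)^{\alpha-1},
$$
which is (ii).

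For (ii)$\Rightarrow$(i): since $\alpha>0$, the bound in (ii) gives $\int_0^1 M_\infty(\rho,g')\,d\rho<\infty$, so $g$ extends continuously to $\ov\D$ (in fact $g\in\mathcal A$). Given $e^{i\theta}$ and $e^{i(\theta+t)}$ with $0<t<1$, I would integrate $g'$ along the contour running radially from $e^{i\theta}$ in to $(1-t)e^{i\theta}$, then along the arc $|z|=1-t$ to $(1-t)e^{i(\theta+t)}$, then radially back out to $e^{i(\theta+t)}$. On $|z|=1-t$ one has $|g'(z)|\lesssim t^{\alpha-1}$, so the arc of length $\asymp t$ contributes $\lesssim t^\alpha$, while each radial piece contributes $\lesssim\int_0^t s^{\alpha-1}\,ds\asymp t^\alpha$. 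Hence $|g(e^{i(\theta+t)})-g(e^{i\theta})|\lesssim t^\alpha$ uniformly in $\theta$ and $t$, which is (i).

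For (ii)$\Leftrightarrow$(iii): if $|g'(z)|\lesssim(1-|z|)^{\alpha-1}$, then on $S(I)$ with $|I|=h$ we have $|g'(z)|^2(1-|z|^2)\lesssim(1-|z|)^{2\alpha-1}$, and since $2\alpha-1>-1$, $\mu_g(S(I))\lesssim h\int_{1-h}^1(1-\rho)^{2\alpha-1}\,d\rho\asymp h^{2\alpha+1}$, giving (iii). Conversely, fix $z_0$ with $|z_0|=r$ near $1$, set $\rho=\frac{1-r}{2}$, and use that $|g'|^2$ is subharmonic, that $1-|z|\asymp1-r$ on $D(z_0,\rho)$, and that $D(z_0,\rho)\subset S(I)$ for a suitable $I$ with $|I|\asymp1-r$; the sub-mean value property then gives
$$
|g'(z_0)|^2\lesssim\frac{1}{\rho^2}\int_{D(z_0,\rho)}|g'|^2\,dA\lesssim\frac{1}{(1-r)^3}\int_{D(z_0,\rho)}|g'(z)|^2(1-|z|^2)\,dA(z)\lesssim\frac{\mu_g(S(I))}{(1-r)^3}\lesssim(1-r)^{2\alpha-2},
$$
so $M_\infty(r,g')\lesssim(1-r)^{\alpha-1}$ (the cases of $r$ bounded away from $1$ being trivial), which is (ii).

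The step I expect to require the most care is (ii)$\Rightarrow$(i): converting an interior growth bound on $g'$ into a boundary modulus-of-continuity estimate needs the three-piece contour above together with a clean justification — here furnished by the uniform radial convergence that $\alpha>0$ forces — that the resulting bound passes to the boundary values. The remaining implications are either a one-variable integral or a standard subharmonicity/Carleson-box estimate.
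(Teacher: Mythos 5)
The paper does not prove Lemma~\ref{le:2} at all: it is stated as a quoted classical result (a ``letter lemma'') with a citation to Duren's book, so there is no in-paper argument to compare against. Judged on its own, your proof is essentially correct and is the standard textbook argument: (i)$\Leftrightarrow$(ii) is the Hardy--Littlewood theorem (Duren, Theorem~5.1), and your (ii)$\Leftrightarrow$(iii) via the one-variable integral over the Carleson box and the sub-mean-value property of the subharmonic function $|g'|^2$ on $D\bigl(z_0,\tfrac{1-r}{2}\bigr)\subset S(I)$, $|I|\asymp 1-r$, is exactly how this reformulation is usually established; the exponent bookkeeping ($2\alpha-1>-1$ in one direction, the factor $(1-r)^{-3}$ in the other) checks out.

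The one soft spot is the very first step. The paper defines $\Lambda(\alpha)$ purely through the boundary modulus of continuity, and your proof of (i)$\Rightarrow$(ii) begins by ``invoking'' the reformulation $|g(z)-g(w)|\lesssim|z-w|^\alpha$ for all $z,w\in\ov{\D}$; passing from the boundary condition to this closed-disc condition is itself the nontrivial half of the Hardy--Littlewood theorem, so as written this step borrows essentially what it is meant to prove. The standard self-contained route avoids the interior circle altogether: since $g\in\Lambda(\alpha)$ forces $g\in H^\infty$, one writes
\begin{equation*}
g'(re^{i\t})=\frac{1}{2\pi i}\int_{\T}\frac{g(\z)-g(e^{i\t})}{(\z-re^{i\t})^2}\,d\z,
\end{equation*}
which is legitimate because $\int_{\T}(\z-z)^{-2}\,d\z=0$, and then estimates
\begin{equation*}
|g'(re^{i\t})|\lesssim\int_{-\pi}^{\pi}\frac{|t|^{\alpha}}{(1-r)^2+t^2}\,dt\lesssim(1-r)^{\alpha-1}.
\end{equation*}
With that substitution the argument is complete; the remaining implications ((ii)$\Rightarrow$(i) via your three-piece contour, and (ii)$\Leftrightarrow$(iii)) are fine as you have them.
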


We will also need the following result~\cite[Theorem~1(i)]{GP:JFA06}.

\begin{lettertheorem}\label{pr:dpq}
Let $0<p<q<\infty$ and $\mu$ be a positive Borel measure on~$\D$.
Then $\mu$ is a $q$-Carleson measure for $\Dp$ if and only if
$\mu$ is a $\frac{q}{p}$-Carleson measure.
Moreover,
    $$
    \|Id_{\left(\Dp,L^q(\mu)\right)}\|^q\asymp\sup_{I\subset\T}\frac{\mu\left(S(I) \right)}{|I|^\frac{q}p}.
    $$
\end{lettertheorem}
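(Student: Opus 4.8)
The plan is to prove the two implications separately while keeping track of constants, so that the quantitative comparison $\|Id_{(\Dp,L^q(\mu))}\|^q\asymp\sup_{I}\mu(S(I))/|I|^{q/p}$ drops out at the end. For the necessity I would test the embedding against the functions $F_{a,p,\g}$ from the proof of Lemma~\ref{le:1}. Since $|F_{a,p,\g}|\asymp1$ on $S(a)$ and $\|F_{a,p,\g}\|_{\Dp}^p\asymp(1-|a|)$, boundedness of $Id:\Dp\to L^q(\mu)$ forces
$$\mu(S(a))^{1/q}\lesssim\Big(\int_{S(a)}|F_{a,p,\g}|^q\,d\mu\Big)^{1/q}\le\|Id\|\,\|F_{a,p,\g}\|_{\Dp}\asymp\|Id\|\,(1-|a|)^{1/p},$$
whence $\mu(S(a))\lesssim\|Id\|^q|I_a|^{q/p}$. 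Passing from the boxes $S(a)$ (with $a=(1-|I|)e^{ic_I}$) to arbitrary arcs $I$ in the standard way then yields $\sup_{I}\mu(S(I))/|I|^{q/p}\lesssim\|Id\|^q$, i.e. $\mu$ is a $\tfrac qp$-Carleson measure.

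For the sufficiency I would split according to $p$. When $p\le2$ the inclusion \eqref{2} (together with $D^2_1=H^2$ at $p=2$) gives $\Dp\subseteq H^p$ with $\|f\|_{H^p}\lesssim\|f\|_{\Dp}$, so it suffices to embed $H^p$. Assuming $\mu(S(I))\lesssim|I|^{q/p}$ and choosing $\alpha$ with $p\alpha>1$, the standard pointwise estimate $|f(z)|\lesssim[M(|f^\ast|^{1/\alpha})(z)]^{\alpha}$ (with $f^\ast$ the boundary function and $M$ the H\"ormander maximal function) reduces the $H^p$ embedding to the boundedness of $[M((\cdot)^{1/\alpha})]^{\alpha}:L^p(\T)\to L^q(\mu)$, which is exactly Theorem~\ref{co:maxbouhp}. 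Since $\|f^\ast\|_{L^p(\T)}\asymp\|f\|_{H^p}\lesssim\|f\|_{\Dp}$, this delivers $\|f\|_{L^q(\mu)}\lesssim\|f\|_{\Dp}$ throughout the range $p\le2$.

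The case $p>2$ is the crux, because by \eqref{1} one now has $H^p\subsetneq\Dp$, so the larger space cannot be reached through the Hardy-space embedding, and Theorem~\ref{co:maxbouhp} does not apply directly: a function in $\Dp\setminus H^p$ need not even have boundary values in $L^p(\T)$. Here I would run a Whitney/dyadic decomposition of $\D$ into the top halves $R_{n,k}$ of the Carleson boxes $S(I_{n,k})$ with $|I_{n,k}|=2^{-n}$, so that $\int_\D|f|^q\,d\mu=\sum_{n,k}\int_{R_{n,k}}|f|^q\,d\mu\le\sum_{n,k}\big(\sup_{R_{n,k}}|f|\big)^q\mu(R_{n,k})$ and $\mu(R_{n,k})\le\mu(S(I_{n,k}))\lesssim2^{-nq/p}$. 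The crude growth bound $|f(z)|\lesssim\|f\|_{\Dp}(1-|z|)^{-1/p}$, obtained by integrating the pointwise estimate $|f'(z)|\lesssim\|f\|_{\Dp}(1-|z|)^{-1-1/p}$, is by itself too weak to make the series converge; instead I would control $\sup_{R_{n,k}}|f|$ by a telescoping sum of the local energies $E_{m,j}=\int_{\widetilde R_{m,j}}|f'|^p(1-|\xi|)^{p-1}\,dA$ over the chain of predecessors of $(n,k)$, using a H\"older estimate for the oscillation of $f$ across each Whitney box (which contributes a factor $(2^mE_{m,j})^{1/p}$). Since $\sum_{m,j}E_{m,j}\lesssim\|f\|_{\Dp}^p$ by bounded overlap of the dilates $\widetilde R_{m,j}$, the matter reduces to a discrete weighted inequality in which the decay $2^{-nq/p}$ of $\mu(R_{n,k})$ is played against the accumulated energies; this is where the hypothesis $q>p$ is decisive, as it converts the fixed $\ell^p$ energy budget into an $\ell^q$ bound via a discrete H\"older/Hardy summation.

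Assembling the three ranges gives the equivalence, and retaining constants throughout yields the norm comparison. The main obstacle is precisely the $p>2$ sufficiency: the telescoping control of $\sup_{R_{n,k}}|f|$ by the local Dirichlet energies and the ensuing summation must be arranged so that $\mu(S(I))\lesssim|I|^{q/p}$ together with $q>p$ produces a convergent series. It is this step, rather than the soft necessity or the $p\le2$ reduction to Theorem~\ref{co:maxbouhp}, that carries the weight of the theorem.
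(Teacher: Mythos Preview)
The paper does not supply its own proof of this statement: Theorem~\ref{pr:dpq} is quoted verbatim from \cite[Theorem~1(i)]{GP:JFA06} and invoked as a black box. So there is no ``paper's proof'' to compare against; the relevant benchmark is the argument in \cite{GP:JFA06}.

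That said, your proposal is sound. The necessity via the test functions $F_{a,p,\g}$ is the standard move and is exactly how one expects the lower bound $\sup_I\mu(S(I))/|I|^{q/p}\lesssim\|Id\|^q$ to fall out. For sufficiency, your split at $p=2$ is correct but inessential: the dyadic/Whitney scheme you outline for $p>2$ in fact works for every $p\in(0,\infty)$ and does not need the detour through $H^p$ and Theorem~\ref{co:maxbouhp}. The crux you identify --- the discrete Hardy-type inequality
\[
\sum_{n,k}2^{-nq/p}\Bigl(\sum_{m\le n}2^{m/p}E_{m,\pi_m(n,k)}^{1/p}\Bigr)^{q}
\;\lesssim\;\Bigl(\sum_{m,j}E_{m,j}\Bigr)^{q/p}
\]
--- does hold precisely because $q>p$: insert a factor $2^{(m-n)\delta}$ with $0<\delta<\tfrac1p-\tfrac1q$ in the inner sum, apply H\"older with exponent $p$, then Minkowski in $\ell^{q/p}$ over $m$, and finally use that each level-$m$ box has $2^{n-m}$ descendants at level $n$ together with $\|\cdot\|_{\ell^q}\le\|\cdot\|_{\ell^p}$. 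The geometric series that appears converges exactly under the constraint $\delta<\tfrac1p-\tfrac1q$, which explains why $q>p$ is the decisive hypothesis. Your write-up stops just short of carrying this out, but the route is correct and nothing is missing conceptually.
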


\section{Integral operators from Hardy to Dirichlet type spaces $\Dq$}\label{intoper}

\begin{Prf}{\em{Theorem \ref{th:1}}.}
It is known that $T_g: H^p\to H^p$ is bounded if and only if
$g\in\BMOA$~\cite{AC}, and therefore (ii) and (iii) are equivalent. Moreover, since $\Dp\subset H^p$ for $0<p\le 2$, (ii)
implies (i). To complete the proof we will show that $g\in\BMOA$,
whenever $T_g: \Dp\to H^p$ is bounded. To see this, note first
that $\|g\|_{\B}\lesssim\|T_g\|_{(\Dp,H^p)}$ by Lemma~\ref{le:1},
and thus $g\in\B$. Let now $1<\a,\b<\infty$ such that
$\b/\a=p/2<1$, and let $\a'$ and $\b'$ be the conjugate indexes of
$\a$ and $\b$. Assume for a moment that $g'$ is continuous
on~$\overline{\D}$. Then \eqref{0}, Fubini's theorem and
H\"older's inequality yield
    \begin{equation}
    \begin{split}\label{eq:tgb1hp}
    &\int_{S(a)}|g'(z)|^2(1-|z|^2)\,dA(z)\\
    &\asymp\int_\T\left(\int_{S(a)\cap\Gamma_\sigma(\z)}|g'(z)|^2|F_{a,p,\g}(z)|^2\,dA(z)\right)^{\frac1\a+\frac1{\a'}}\,|d\z|\\
    &\le\left(\int_\T\left(\int_{\Gamma_\sigma(\z)}|g'(z)|^2|F_{a,p,\g}(z)|^2\,dA(z)\right)^\frac{\b}{\a}\,|d\z|\right)^\frac1\b\\
    &\quad\cdot\left(\int_\T\left(\int_{\Gamma_\sigma(\zeta)\cap
    S(a)}|g'(z)|^2\,dA(z)\right)^\frac{\b'}{\a'}\,|d\z|\right)^\frac1{\b'}\\
    &\asymp\|T_g(F_{a,p,\g})\|_{H^p}^\frac{p}{\b}\|S_g(\chi_{S(a)})\|_{L^\frac{\b'}{\a'}(\T)}^\frac{1}{\a'},\quad a\in\D,
    \end{split}
    \end{equation}
where
    $$
    S_g(\varphi)(\zeta)=\int_{\Gamma_\sigma(\zeta)}|\varphi(z)|^2|g'(z)|^2\,dA(z),\quad
    \zeta\in\T,
    $$
for any bounded function $\varphi$ on $\D$. Now
$\left(\frac{\b'}{\a'}\right)'=\frac{\b(\a-1)}{\a-\b}>1$, and
hence
    \begin{equation}
    \begin{split}\label{eq:tgb2hp}
    \|S_g(\chi_{S(a)})\|_{L^\frac{\b'}{\a'}(\T)}
    =\sup_{\|h\|_{L^{\frac{\b(\a-1)}{\a-\b}}(\T)}\le1}
    \left|\int_\T h(\z)S_g(\chi_{S(a)})(\z)\,|d\z|\right|
    \end{split}
    \end{equation}
by the duality. To estimate the right hand side, we shall write $I(z)$ for the arc $\left\{\zeta\in\T: z \in \Gamma_\sigma(\z)\right\}$ with
$|I(z)|\asymp 1-|z|$. Then Fubini's
theorem, H\"older's inequality and Theorem~\ref{co:maxbouhp}
yield
     \begin{equation}
     \begin{split}\label{eq:tgb3hp}
    &\left|\int_\T h(\z)S_g(\chi_{S(a)})(\z)\,|d\z|\right|
    \le\int_\T|h(\z)|\int_{\Gamma_\sigma(\zeta)\cap
    S(a)}|g'(z)|^2\,dA(z)\,|d\z|\\
    &\asymp \int_{S(a)}|g'(z)|^2(1-|z|^2)\left(\frac{1}{1-|z|^2}\int_{I(z)}|h(\z)|\,|d\z|\right)\,dA(z)\\
    &\lesssim\int_{S(a)}|g'(z)|^2(1-|z|^2)M(|h|)(z)\,dA(z)\\
    &\le\left(\int_{S(a)}|g'(z)|^2(1-|z|^2)\,dA(z)\right)^\frac{\a'}{\b'}\\
    &\quad\cdot\left(\int_\D
    M(|h|)^{\left(\frac{\b'}{\a'}\right)'}|g'(z)|^2(1-|z|^2)\,dA(z)\right)^{1-\frac{\a'}{\b'}}\\
    &\lesssim\left(\int_{S(a)}|g'(z)|^2(1-|z|^2)\,dA(z)\right)^\frac{\a'}{\b'}\\
    &\quad\cdot\left(\sup_{a\in\D}\frac{\int_{S(a)}|g'(z)|^2(1-|z|^2)\,dA(z)}{1-|a|}\right)^{1-\frac{\a'}{\b'}}
    \|h\|_{L^{\left(\frac{\b'}{\a'}\right)'}(\T)}.
    \end{split}
    \end{equation}
Since any dilated function $g_r(z)=g(rz)$, $0<r<1$, is analytic on $D\left(0,\frac{1}{r}\right)$, by replacing $g$ by $g_r$ in \eqref{eq:tgb1hp}--\eqref{eq:tgb3hp},
we deduce
    \begin{equation}\label{59}
    \begin{split}
    &\int_{S(a)}|g_r'(z)|^2(1-|z|^2)\,dA(z)\\
    &\lesssim\|T_{g_r}(F_{a,p,\g})\|_{H^p}^\frac{p}{\b}
    \left(\int_{S(a)}|g_r'(z)|^2(1-|z|^2)\,dA(z)\right)^\frac{1}{\b'}\\
    &\quad\cdot\left(\sup_{a\in\D}\frac{\int_{S(a)}|g_r'(z)|^2(1-|z|^2)\,dA(z)}{1-|a|}\right)^{\frac{1}{\a'}\left(1-\frac{\a'}{\b'}\right)}.
    \end{split}
    \end{equation}
We claim that there exists $\g$ and a constant $C=C(p,\g)>0$ such
that
    \begin{equation}\label{2n}
    \sup_{0<r<1}\|T_{g_r}(F_{a,p,\g})\|_{H^p}^p\le
    C\|T_{g}\|_{(\Dp,H^p)}^p(1-|a|),\quad a\in\D,
    \end{equation}
the proof of which is postponed for a moment. Now this combined
with \eqref{59} and Fatou's lemma yield
    $$
    \sup_{a\in\D}\frac{\int_{S(a)}|g'(z)|^2(1-|z|^2)\,dA(z)}{1-|a|}\lesssim\|T_{g}\|_{(\mathcal{D}^p_{p-1},H^p)}^2,
    $$
and so $g\in\BMOA$.

It remains to prove \eqref{2n}. To see this fix $\g>p$. Recall that
    $$
    \|T_{g_r}(F_{a,p,\g})\|^p_{H^p}
    \asymp  \int_{\T}\left( \int_{\Gamma_\sigma(\zeta)} r^2 |g'(rz))|^2 |F_{a,p,\g}(z)|^2dA(z)\right)^{p/2}
    |d\zeta|.
    $$
If $|a| < \frac{1}{2}$, then
    \begin{equation*}
    \begin{split}
    \|T_{g_r}(F_{a,p,\g})\|^p_{H^p}
    &\lesssim(1 -|a|)^{\gamma + 1} \int_{\T}\left(\int_{\Gamma_\sigma(\zeta)}
    r^2 |g'(rz)|^2 dA(z)\right)^{\frac{p}{2}}|d\zeta|  \\
    & \asymp(1 -|a|)^{\gamma + 1}\|g_r-g(0)\|_{H^p}^p
    \le (1 -|a|)\|g-g(0)\|_{H^p}^p \\
    & =(1 -|a|)\|T_g(1)\|_{H^p}^p \lesssim(1 -|a|)\|T_g\|_{(\Dp,
    H^p)}^p.
    \end{split}
    \end{equation*}
Let now $\frac{1}{2} \le |a| < \frac{1}{2 - r}$. Then
$|1-\overline{a}rz|\le2|1-\overline{a}z|$ for all $z\in\D$, and
hence
    \begin{equation*}
    \begin{split}
    \|T_{g_r}(F_{a,p,\g})\|^p_{H^p}
    &\lesssim\|(T_{g}(F_{a,p,\g}))_r\|^p_{H^p}
    \le\|T_g(F_{a,p,\g}) \|_{H^p}^p \\
    & \le \|T_g \|_{(\Dp, H^p)}^p\|F_{a,p,\g}\|^p_{\Dp}
    \asymp \|T_g \|_{(\Dp, H^p)}^p(1 - |a|).
    \end{split}
    \end{equation*}
In the remaining case $\frac{1}{2-r}\le|a|<1$ we have
$r\le2-\frac{1}{|a|}\le|a|$. Now $\g>p$, and hence
    \begin{equation*}
    \begin{split}
    \|T_{g_r}(F_a)\|^p_{H^p}
    & \lesssim M^p_{\infty}(r, g')(1 - |a|)^{\gamma + 1}\int_{\T}\left( \int_{\Gamma_\sigma(\zeta)}
    \frac{dA(z)}{|1 - \overline{a}z|^{\frac{2(\gamma + 1)}{p }}} \right)^{p/2} |d\zeta|  \\
    & \lesssim M^p_{\infty}(|a|, g')
    (1 - |a|)^{\gamma + 1} \left \|  \frac{1}{(1 - \overline{a}z)^{\frac{\gamma + 1}{p } - 1}}\right  \|_{H^p}^p  \\
    & \asymp \left( M_{\infty}(|a|, g')(1 - |a|)\right)^p (1 - |a|)
    \le \|g\|_{\B}^p (1-  |a|)  \\
    & \lesssim \|T_g\|^p_{(\Dp, H^p)}(1-  |a|).
    \end{split}
    \end{equation*}
By combining these three separate cases we deduce \eqref{2n}.
\end{Prf}

\medskip

Next, we will prove Theorem~\ref{th:3} by using similar ideas that were employed in the proof of Theorem~\ref{th:1}.

\medskip

\begin{Prf}{\em{Theorem \ref{th:3}}.\,}
It is known that (ii) and (iii) are equivalent~\cite{AC}. Further,
 Lemma~\ref{le:1} and Lemma \ref{le:2} give (i)$\Rightarrow$(iii) and (b).
Moreover, if $0<p\le 2$, then $\mathcal{D}^p_{p-1}\subset H^p$
and hence, in this case, (ii) implies (i). To complete the proof,
we show that (iii) implies (i) when $2<p<\infty$. Since $q>2$,
$L^{q/2}(\T)$ can be identified with the dual of
$L^{\frac{q}{q-2}}(\T)$, that is,
$L^{q/2}(\T)=\left(L^{\frac{q}{q-2}}(\T)\right)^\star$. Therefore,
$T_g:\Dp\to H^q$ is bounded if and only if
    \begin{equation*}
    \left|
    \int_\T\,h(\zeta)\left(\int_{\Gamma_\sigma(\zeta)}|f(z)|^2|g'(z)|^2\,dA(z)\right)\,|d\zeta|
    \right|\lesssim\|h\|_{L^{\frac{q}{q-2}}(\T)}\|f\|^2_{\Dp}
    \end{equation*}
for all $h\in L^{\frac{q}{q-2}}(\T)$ and
$f\in\mathcal{D}^p_{p-1}$. To see this, we use first Fubini's
theorem to obtain
    \begin{equation*}
    \begin{split}
    &\left|
    \int_\T\,h(\zeta)\left(\int_{\Gamma_\sigma(\zeta)}|f(z)|^2|g'(z)|^2\,dA(z)\right)\,d|\zeta|\right|\\
    &\le\int_\D|f(z)|^2|g'(z)|^2\left(\int_{I(z)}|h(\zeta)|\,|d\zeta|\right)\,dA(z)\\
    &\lesssim\int_\D|f(z)|^2M(|h|)(z)|g'(z)|^2(1-|z|^2)\,dA(z).
    \end{split}
    \end{equation*}
Since $|g'(z)|^2(1-|z|^2)\,dA(z)$ is a
$\left(2\left(\frac1p-\frac1q\right)+1\right)$-Carleson measure by
Lemma~\ref{le:2}, and $2(\frac1p-\frac1q)+1=(2+p-\frac{2p}{q})/p$,
we may estimate the last integral upwards by H\"{o}lder's
inequality, Theorem~\ref{pr:dpq} and Theorem~\ref{co:maxbouhp} to
    \begin{equation*}
    \begin{split}
    &\left(\int_\D|f(z)|^{2+p-\frac{2p}{q}}|g'(z)|^2(1-|z|^2)\,dA(z)\right)^{\frac{2q}{(2+p)q-2p}}\\
    &\quad\cdot\left(\int_\D\left(M(|h|)(z)\right)^{1+\frac{2q}{p(q-2)}}|g'(z)|^2(1-|z|^2)\,dA(z)\right)^{\frac{1}{1+\frac{2q}{p(q-2)}}}\\
    &\lesssim\|f\|^2_{\Dp}\|h\|_{L^{\frac{q}{q-2}}(\T)}.
    \end{split}
    \end{equation*}
These estimates give the desired inequality for all $h\in
L^{\frac{q}{q-2}}(\T)$ and $f\in\mathcal{D}^p_{p-1}$, and thus
$T_g:\Dp\to H^q$ is bounded.
\end{Prf}

\medskip

We now prove Proposition~\ref{factorization}.

\medskip

\begin{Prf}
{\em{Proposition \ref{factorization}}}.\,
Let $F_2$ be such that  $F_2' = f_2$. Then $|F_2'(z)| = \og\left(
\frac{1}{(1 - |z|)^{1/p_2}} \right)$ by the assumption, and hence $F_2 \in
\Lambda(1 - \frac{1}{p_2})$ by Lemma~\ref{le:2}. Now Theorem~\ref{th:3}
implies that the integral operator $T_{F_2}:\mathcal{D}^{p_1}_{p_1-1}\to H^{\frac{p}{1 - p}}$ is bounded, and since $f_1\in\mathcal{D}^{p_1}_{p_1-1}$ by the assumption, we deduce
    $
    T_{F_2}(f_1)(z)=\int_0^z F'_2(\zeta)f_1(\zeta)\,d\zeta
    =\int_0^z f(\zeta)\,d\zeta\in H^{\frac{p}{1-p}},
    $
which gives  the assertion.
\end{Prf}

\medskip

We finish this section by proving the expected versions of Theorems~\ref{th:1} and~\ref{th:3} for compact operators. The next auxiliary result is standard, and therefore its proof is omitted.

\begin{lemma}\label{le:compacidadtg}
Let $0<p,q<\infty$ and $g\in \H(\D)$. Then the following are equivalent:
\begin{enumerate}
\item[\rm(i)]
$T_g:\Dp\to H^q$ is compact;
\item[\rm(ii)]
For any sequence of analytic functions $\{f_n\}_{n=1}^\infty$ on $\D$ that converges uniformly to $0$ on compact subsets of $\D$ and satisfies
$\sup_{n\in\N}\|f_n\|_{\Dp}<\infty$, we have $\lim_{n\to\infty}\|T_g(f_n)\|_{H^q}=0$.
\end{enumerate}
\end{lemma}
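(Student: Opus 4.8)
The plan is to establish both implications by the routine normal-families argument, relying only on two standard facts: the pointwise growth estimate $|f(z)|\lesssim\|f\|_{\Dp}(1-|z|)^{-1/p}$ for $f\in\Dp$, and the fact that convergence in $H^q$ forces uniform convergence on compact subsets of $\D$ (because point evaluations are bounded functionals on $H^q$, with norm $\asymp(1-|z|)^{-1/q}$).

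For (i)$\Rightarrow$(ii), assume $T_g:\Dp\to H^q$ is compact and let $\{f_n\}$ be as in (ii), with $\sup_n\|f_n\|_{\Dp}<\infty$ and $f_n\to0$ uniformly on compact subsets. It suffices to show that every subsequence of $\{\|T_g(f_n)\|_{H^q}\}$ has a further subsequence tending to $0$. Given a subsequence, compactness of $T_g$ yields a further subsequence $\{f_{n_k}\}$ with $T_g(f_{n_k})\to h$ in $H^q$ for some $h\in H^q$; by the remark above this gives $T_g(f_{n_k})(z)\to h(z)$ for each $z\in\D$. On the other hand, $T_g(f_{n_k})(z)=\int_0^z f_{n_k}(\zeta)g'(\zeta)\,d\zeta\to0$, since $f_{n_k}\to0$ uniformly on the compact segment $[0,z]$ while $g'$ is bounded there. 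Hence $h\equiv0$, and $\|T_g(f_{n_k})\|_{H^q}\to0$, which proves (ii).

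For (ii)$\Rightarrow$(i), let $\{f_n\}$ be bounded in $\Dp$, say $\sup_n\|f_n\|_{\Dp}\le M$. The pointwise estimate shows that $\{f_n\}$ is uniformly bounded on each compact subset of $\D$, hence normal by Montel's theorem; pass to a subsequence $f_{n_k}\to f$ uniformly on compact subsets, where $f\in\H(\D)$, and $f\in\Dp$ with $\|f\|_{\Dp}\le M$ by Fatou's lemma applied to $|f_{n_k}'|^p\to|f'|^p$. Then $f_{n_k}-f\to0$ uniformly on compact subsets and $\sup_k\|f_{n_k}-f\|_{\Dp}\le 2M$, so (ii) gives $\|T_g(f_{n_k})-T_g(f)\|_{H^q}=\|T_g(f_{n_k}-f)\|_{H^q}\to0$. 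Thus an arbitrary bounded sequence in $\Dp$ has a subsequence whose $T_g$-image converges in $H^q$, i.e. $T_g$ is compact.

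There is no genuine obstacle here; the only points needing a word of care are that when $q<1$ one replaces the norm by the translation-invariant metric $d(u,v)=\|u-v\|_{H^q}^q$, for which completeness and boundedness of point evaluations still hold, and that the limit $f$ of the normal family lies in $\Dp$, which is immediate from Fatou's lemma as above. This is precisely why the result is standard.
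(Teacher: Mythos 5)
Your proof is correct and is exactly the standard normal-families argument that the paper has in mind: the paper explicitly omits the proof of this lemma as standard, and your two implications (extraction of $H^q$-convergent subsequences plus pointwise identification of the limit as $0$, and Montel plus Fatou to reduce a bounded sequence to one tending to $0$ on compacta) supply it in the expected way. The only cosmetic caveats are the ones you already flag (quasi-norm conventions for $p,q<1$, where the bound $\|f_{n_k}-f\|_{\Dp}\le 2^{1/p}M$ replaces $2M$), so there is nothing to add.
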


Obviously the statement in this lemma remains valid if $H^p$ is replaced by~$\Dp$.

The space $\VMOA$
consists of those functions in the Hardy space $H^1$ that have
\emph{vanishing mean oscillation} on the
boundary~$\T$. It is known that this space is the closure of polynomials in $\BMOA$ and is characterized by the condition
    $$
    \lim_{|a|\to 1^-}\frac{\int_{S(a)}|g'(z)|^2(1-|z|^2)\,dA(z)}{1-|a|}=0.
    $$

\begin{theorem}\label{th:1-com}
Let $0 <  p\le 2$ and $g\in\H(\D)$. Then the following are
equivalent:
\begin{itemize}
\item[\rm(i)] $T_g: \Dp\to H^p$ is compact;
 \item[\rm(ii)] $T_g: H^p \to H^p$ is compact;
 \item[\rm(iii)] $g\in\VMOA$.
\end{itemize}
\end{theorem}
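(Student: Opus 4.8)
The plan is to mimic the structure of the proof of Theorem~\ref{th:1}, replacing each "sup/bounded" statement by its "limit/little-oh" counterpart, and using the standard sequential characterization of compactness in Lemma~\ref{le:compacidadtg}. The equivalence (ii)$\Leftrightarrow$(iii) is already known from Aleman--Cima--Siskakis~\cite{AC,AS0}, and (ii)$\Rightarrow$(i) follows immediately from the inclusion $\Dp\subset H^p$ for $0<p\le2$ together with Lemma~\ref{le:compacidadtg} (a $\Dp$-bounded sequence converging to $0$ uniformly on compacta is also $H^p$-bounded). So the substantive task is (i)$\Rightarrow$(iii): assuming $T_g:\Dp\to H^p$ is compact, show
    $$
    \lim_{|a|\to1^-}\frac{1}{1-|a|}\int_{S(a)}|g'(z)|^2(1-|z|^2)\,dA(z)=0.
    $$

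First I would record that compactness implies boundedness, so by Theorem~\ref{th:1} we already have $g\in\BMOA$, in particular $g\in\B$. The key point is to run the chain of estimates \eqref{eq:tgb1hp}--\eqref{eq:tgb3hp} with the test functions $F_{a,p,\g}$ and track where the factor $\|T_g(F_{a,p,\g})\|_{H^p}^{p/\b}$ enters. As in the proof of \eqref{2n}, for $|a|$ close to $1$ one has $\|F_{a,p,\g}\|_{\Dp}^p\asymp(1-|a|)$, so the normalized functions $\widetilde{F}_a=F_{a,p,\g}/\|F_{a,p,\g}\|_{\Dp}$ form a bounded family in $\Dp$ that converges to $0$ uniformly on compact subsets of $\D$ as $|a|\to1^-$ (since $\g>p$, $F_{a,p,\g}\to0$ locally uniformly). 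By Lemma~\ref{le:compacidadtg}, compactness of $T_g$ then gives $\|T_g(\widetilde F_a)\|_{H^p}\to0$, i.e.
    $$
    \|T_g(F_{a,p,\g})\|_{H^p}^p = \op\big(1-|a|\big),\qquad |a|\to1^-.
    $$
Feeding this into the inequality obtained from \eqref{eq:tgb1hp}--\eqref{eq:tgb3hp} (applied, as there, to the dilations $g_r$ and then passing to the limit with Fatou's lemma, exactly as in the passage yielding \eqref{59} and the displayed conclusion after \eqref{2n}), the $\BMOA$-type quantity is bounded by a product in which one factor is $\op(1-|a|)^{1/\b}$ and the remaining factors involve only $\big(\int_{S(a)}|g'|^2(1-|z|^2)\,dA\big)^{1/\b'}$ and the (finite, by $g\in\BMOA$) global supremum. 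Dividing through by $1-|a|$ and solving for $\frac{1}{1-|a|}\int_{S(a)}|g'|^2(1-|z|^2)\,dA$ — the exponents $\frac1\b+\frac1{\b'}=1$ make this bookkeeping close — yields that this quantity is $\op(1)$ as $|a|\to1^-$, which is precisely $g\in\VMOA$.

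The main obstacle I anticipate is the uniformity in the dilation parameter $r$: one must ensure that the "$\op(1-|a|)$" bound on $\|T_{g_r}(F_{a,p,\g})\|_{H^p}^p$ holds \emph{uniformly in} $0<r<1$, paralleling the role of \eqref{2n} in the bounded case, before one is entitled to apply Fatou's lemma in $r$. This requires repeating the three-case analysis ($|a|<\tfrac12$, $\tfrac12\le|a|<\tfrac1{2-r}$, $\tfrac1{2-r}\le|a|<1$) from the proof of \eqref{2n} and checking that in each case the bound degrades to $\op(1-|a|)$: the first case uses $\|g_r-g(0)\|_{H^p}\le\|g-g(0)\|_{H^p}$ and is harmless for $|a|$ bounded away from $1$; the second uses $\|T_{g_r}(F_{a,p,\g})\|_{H^p}\lesssim\|T_g(F_{a,p,\g})\|_{H^p}=\op(1-|a|)^{1/p}\cdot(1-|a|)^{?}$ via the subordination $\|F_r\|_{H^p}\le\|F\|_{H^p}$; and the third uses the Bloch estimate $M_\infty(|a|,g')(1-|a|)\lesssim\|g\|_\B$ together with $g\in\VMOA$'s precursor $M_\infty(|a|,g')(1-|a|)=\op(1)$, which one should first extract from $g\in\BMOA$ being improvable — actually here one only needs the boundedness, and the little-oh is supplied by the $F_{a,p,\g}$ mechanism itself. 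Once these three cases are assembled, the rest is the routine limit-version of the computation already carried out for Theorem~\ref{th:1}, and I would present it compactly by saying "arguing as in the proof of Theorem~\ref{th:1} with $\op$ in place of $\og$."
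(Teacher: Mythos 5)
Your overall architecture --- (ii)$\Leftrightarrow$(iii) from \cite{AC}, (ii)$\Rightarrow$(i) from \eqref{2} together with Lemma~\ref{le:compacidadtg}, and (i)$\Rightarrow$(iii) by first noting $g\in\BMOA$ via Theorem~\ref{th:1}, then running the chain \eqref{eq:tgb1hp}--\eqref{eq:tgb3hp} on the normalized test functions $f_{a,p,\g}=F_{a,p,\g}/(1-|a|)^{1/p}$ and using that compactness forces $\|T_g(f_{a,p,\g})\|_{H^p}\to0$ --- is exactly the paper's argument, and the exponent bookkeeping you describe does close up to give $\frac{1}{1-|a|}\int_{S(a)}|g'(z)|^2(1-|z|^2)\,dA(z)\lesssim\|T_g(f_{a,p,\g})\|_{H^p}^p\to0$. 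The one place you go astray is the final paragraph on ``uniformity in the dilation parameter $r$.'' That obstacle does not exist here. The dilations $g_r$ and the estimate \eqref{2n} were needed in Theorem~\ref{th:1} only because, for an arbitrary symbol, the quantities $\int_{S(a)}|g'(z)|^2(1-|z|^2)\,dA(z)$ and the global supremum could a priori be infinite, which would forbid the division performed at the end of \eqref{eq:tgb1hp}--\eqref{eq:tgb3hp}. In the compactness proof you have already secured $g\in\BMOA$, so every quantity in that chain is finite for $g$ itself; one simply runs the argument directly on $g$, absorbs the finite supremum into the implicit constant, and divides. No dilation and no Fatou's lemma are required, and this is precisely what the paper does.

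If you nevertheless insist on the dilation route, your sketch of a little-oh version of \eqref{2n} has a genuine gap in the third case $\frac{1}{2-r}\le|a|<1$: the proof of \eqref{2n} there yields only $\|g\|_{\B}^p(1-|a|)$, which is $\og(1-|a|)$ and not $\op(1-|a|)$; the phrase ``the little-oh is supplied by the $F_{a,p,\g}$ mechanism itself'' is not an argument, and the second case as you wrote it (with an unresolved exponent) is likewise incomplete. To repair the third case you would first need $M_\infty(|a|,g')(1-|a|)\to0$ as $|a|\to1^-$, i.e.\ that $g$ lies in the little Bloch space; this does follow from compactness by feeding the normalized test functions into the estimate of Lemma~\ref{le:1}, but it is an extra step you have not supplied --- and, as explained above, an avoidable one.
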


\begin{proof}
It is known that (ii) and (iii) are equivalent by~\cite{AC}. Moreover, by bearing in mind Lemma~\ref{le:compacidadtg} and \eqref{2},
we see that (ii) implies (i). It remains to show that $g\in\VMOA$,
whenever $T_g: \Dp\to H^p$ is compact. Since the proof of this implication is similar to its counterpart in the proof of Theorem~\ref{th:1}, we only show in detail those steps that are significantly different. First observe, that $g\in\BMOA$ by Theorem~\ref{th:1}. Let $f_{a,p,\g}=\frac{F_{a,p,\g}}{(1-|a|)^{1/p}}$, where $\gamma>0$ and $F_{a,p,\g}$ are those functions defined
in the proof of Lemma~\ref{le:1}. It is clear that $\|f_{a,p,\g}\|_{\Dp}\asymp1$
and $f_{a,p}\to 0$, as $|a|\to 1^-$, uniformly in compact subsets
of $\D$. Therefore $\|T_g(f_{a,p,\g})\|_{H^p}\to 0$, as
$|a|\to 1^-$, by Lemma~\ref{le:compacidadtg}. Now, let
$1<\a,\b<\infty$ such that $\b/\a=p/2<1$. Arguing as in
\eqref{eq:tgb1hp}, we deduce
    \begin{equation*}
    \frac{1}{(1-|a|)^\frac2p}\int_{S(a)}|g'(z)|^2(1-|z|^2)\,dA(z)
    \lesssim\|T_g(f_{a,p,\gamma})\|_{H^p}^\frac{p}{\b}
    \|S_g(\chi_{S(a)}f_{a,p,\gamma})\|_{L^{\frac{\b'}{\a'}}(\T)}^\frac{1}{\a'}
    \end{equation*}
for all $a\in\D$. Following the reasoning in the proof of
Theorem~\ref{th:1} and bearing in mind that $g\in\BMOA$, we obtain
    \begin{equation*}\index{$f_{a,p}$}
    \frac{\int_{S(a)}|g'(z)|^2(1-|z|^2)\,dA(z)}{(1-|a|)^\frac{2}{p}}
    \lesssim\|T_g(f_{a,p,\g})\|_{H^p}^\frac{p}{\b}
    \frac{\left(\int_{S(a)}|g'(z)|^2(1-|z|^2)\,dA(z)\right)^{\frac{\a'}{\b'}\cdot\frac{1}{\a'}}}
    {(1-|a|)^{\frac{2}{p}\cdot\frac{1}{\a'}}},
    \end{equation*}
which is equivalent to
    $$
    \frac{\int_{S(a)}|g'(z)|^2(1-|z|^2)\,dA(z)}{(1-|a|)}
    \lesssim\|T_g(f_{a,p,\g})\|_{H^p}^p.$$
Therefore $g\in\VMOA$.
\end{proof}

It is known that the \lq\lq little oh\rq\rq analogue of Lemma~\ref{le:2} is valid. This together with
appropriate modifications in the proofs of Lemma~\ref{le:1} and Theorem~\ref{th:3} give the next result.

\begin{theorem}\label{th:3-compact}
Let $0 <  p < q < \infty$,   $\frac1p-\frac1q\le1$,  and  $g\in\H(\D)$.
The following
are equivalent:
\begin{enumerate}
\item[\rm(i)] $T_g:\Dp\to H^q$ is compact;
 \item[\rm(ii)]
$T_g:H^p\to H^q$ is compact;
 \item[\rm(iii)]
$g\in\lambda(\frac1p-\frac1q)$.
\end{enumerate}
\end{theorem}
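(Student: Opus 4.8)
The plan is to mirror the scheme of Theorem~\ref{th:3} with little-oh replacements throughout, reducing everything to the "big oh" machinery already in place plus the standard compactness criterion of Lemma~\ref{le:compacidadtg}. The equivalence (ii)$\Leftrightarrow$(iii) is the compact version of the Aleman--Cima result \cite{AC}, so it can be quoted. When $0<p\le 2$ the inclusion $\mathcal{D}^p_{p-1}\subset H^p$ together with the observation after Lemma~\ref{le:compacidadtg} gives (ii)$\Rightarrow$(i) immediately. Hence the real content is (i)$\Rightarrow$(iii) and (iii)$\Rightarrow$(i) for $p>2$, and these are obtained from the corresponding steps of Theorem~\ref{th:3} by "localizing near the boundary".

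For (i)$\Rightarrow$(iii): first I would establish a little-oh version of Lemma~\ref{le:1}, namely that $T_g:\mathcal{D}^p_{p-1}\to H^q$ compact forces
$M_\infty(r,g')=\op\big((1-r)^{-(1-\frac1p+\frac1q)}\big)$ as $r\to 1^-$. This follows by feeding the \emph{normalized} test functions $f_{a,p,\g}=F_{a,p,\g}/(1-|a|)^{1/p}$ (as in the proof of Theorem~\ref{th:1-com}) into the same chain of estimates used for Lemma~\ref{le:1}; since $\|f_{a,p,\g}\|_{\mathcal{D}^p_{p-1}}\asymp 1$ and $f_{a,p,\g}\to 0$ uniformly on compacta as $|a|\to 1^-$, Lemma~\ref{le:compacidadtg} gives $\|T_g(f_{a,p,\g})\|_{H^q}\to 0$, and the pointwise estimate on $g'(a)$ then yields the $\op$-statement. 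By the little-oh analogue of Lemma~\ref{le:2}(ii)$\Leftrightarrow$(i) this is exactly $g\in\lambda(\frac1p-\frac1q)$.

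For (iii)$\Rightarrow$(i) when $2<p<\infty$: I would take a normalized sequence $\{f_n\}\subset\mathcal{D}^p_{p-1}$ with $\sup_n\|f_n\|_{\mathcal{D}^p_{p-1}}<\infty$ and $f_n\to 0$ uniformly on compacta, and show $\|T_g(f_n)\|_{H^q}\to 0$. Split the square-function integral defining $\|T_g(f_n)\|_{H^q}^q$ (via the Fefferman--Stein/Lusin-area description) into the part over $|z|<R$ and the part over $R<|z|<1$. On the inner part, $f_n\to 0$ uniformly on $\overline{D(0,R)}$ kills the contribution for each fixed $R$. On the outer part, one runs the duality argument of Theorem~\ref{th:3} verbatim — Fubini, the Hörmander maximal function $M(|h|)$, Hölder, and the Carleson-measure estimates of Theorem~\ref{pr:dpq} and Theorem~\ref{co:maxbouhp} — but now using that $g\in\lambda(\frac1p-\frac1q)$ means, by the little-oh version of Lemma~\ref{le:2}(iii), that $|g'(z)|^2(1-|z|^2)\,dA(z)$ restricted to $R<|z|<1$ has $\big(2(\frac1p-\frac1q)+1\big)$-Carleson constant $\to 0$ as $R\to 1^-$. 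Thus the outer part is bounded by $\varepsilon(R)\cdot\sup_n\|f_n\|_{\mathcal{D}^p_{p-1}}^2$ with $\varepsilon(R)\to 0$; letting first $n\to\infty$ and then $R\to 1^-$ finishes the argument.

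The main obstacle is purely bookkeeping: arranging the outer-region Carleson estimate so that the full Carleson constant $\sup_{I}\mu_g(S(I))/|I|^{2(\frac1p-\frac1q)+1}$ — which need not be small — is never used, only its "tail" $\sup_{I:\,S(I)\subset\{|z|>R\}}\mu_g(S(I))/|I|^{2(\frac1p-\frac1q)+1}$, while the boundedness (not smallness) of the full constant is what controls the error terms coming from $M(|h|)$. Since every inequality in Theorem~\ref{th:3} is already a product of one "$f$-factor" and one "$g$-Carleson-factor", this separation is clean, and for $0<p\le 2$ it is bypassed entirely by the inclusion argument above.
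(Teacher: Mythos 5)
Your proposal is correct and follows essentially the same route the paper intends: the paper's own ``proof'' is just the remark that the little-oh analogue of Lemma~\ref{le:2} together with appropriate modifications of Lemma~\ref{le:1} and Theorem~\ref{th:3} yields the result, and your outline (normalized test functions $f_{a,p,\g}$ plus Lemma~\ref{le:compacidadtg} for the little-oh version of Lemma~\ref{le:1}, and the inner/outer splitting with the vanishing tail Carleson constant in the duality argument of Theorem~\ref{th:3}) is exactly the standard way to carry those modifications out. The only point worth a footnote is the endpoint $\frac1p-\frac1q=1$, where the little-oh analogue of Lemma~\ref{le:2} degenerates; the paper glosses over this in the same way.
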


\section{Growth of integral means of functions in $\Dp$.}\label{sec:intmeans}

In this section we will prove sharp estimates for the growth of $M_p(r,f)$ when $f\in\Dp$ and $2<p<\infty$. If $f\in\Dp$ and $0<p<2$, then $M_p(r,f)$ is uniformly bounded due to \eqref{2}.

\begin{lemma}\label{integralmeans}
Let $2<p<\infty$ and $\Phi:[0,1)\to (1,\infty)$ be a
differentiable increasing unbounded function such that
$\frac{\Phi'(r)}{\Phi(r)}(1-r)$ is decreasing. Then the following hold:
\begin{itemize}
\item[\rm(i)] For any $f\in\Dp$, $M_p(r,f)= \op\left(\left(\log\frac{e}{1-r}\right)^{\frac{1}{2}-\frac{1}{p}}\right),\quad r\to 1^-$;
 \item[\rm(ii)] there exists $f\in\Dp$ such that
    \begin{equation}\label{sharp}
    M_q(r,f)\gtrsim
    \left(\log\frac{e}{1-r}\right)^{\frac{1}{2}}\left(\frac{\Phi'(r)}{\Phi^2(r)}(1-r)\right)^{\frac{1}{p}},\quad 0<r<1,
\end{equation}
for any fixed $0<q<\infty$.
\end{itemize}
 \end{lemma}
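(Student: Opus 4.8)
The plan is to treat the two parts separately, proving (i) by reducing the $\op$-assertion to an $\og$-bound that one attacks with the square function, and (ii) by producing an explicit lacunary series.

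\textbf{Part (i).} Since polynomials are dense in $\Dp$, the $\op$-statement follows once we have the $\og$-bound
\[
M_p(r,f)\lesssim\|f\|_{\Dp}\Big(\log\tfrac{e}{1-r}\Big)^{\frac12-\frac1p},\qquad f\in\Dp,\ \ 0\le r<1:
\]
given $\varepsilon>0$, split $f=P+h$ with $P$ a polynomial and $\|h\|_{\Dp}<\varepsilon$, note that $M_p(r,P)$ stays bounded while $M_p(r,h)\lesssim\varepsilon(\log\tfrac{e}{1-r})^{\frac12-\frac1p}$, and use that $(\log\tfrac{e}{1-r})^{\frac12-\frac1p}\to\infty$. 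To prove the $\og$-bound I would fix $r$, apply the Fefferman--Stein formula \eqref{fsest} to the dilation $f_r(z)=f(rz)$ (which lies in $H^\infty\subset H^p$, with $\|f_r\|_{H^p}=M_p(r,f)$), and observe that after a change of variable $S_{f_r}(\zeta)^2$ is dominated by the truncated square function $\int_{\Gamma_\sigma(\zeta)\cap\{|w|<r\}}|f'(w)|^2\,dA(w)$. Decomposing the Stolz angle into dyadic layers $A_j=\{w:2^{-j-1}\le 1-|w|<2^{-j}\}$ for $0\le j\le N\asymp\log\tfrac1{1-r}$, one has $S_{f_r}(\zeta)^2\lesssim\sum_{j=0}^N a_j(\zeta)$ with $a_j(\zeta)=\int_{\Gamma_\sigma(\zeta)\cap A_j}|f'|^2\,dA$. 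The estimate now runs: (1) the power-mean inequality gives $\big(\sum_{j\le N}a_j(\zeta)\big)^{p/2}\le(N+1)^{p/2-1}\sum_{j\le N}a_j(\zeta)^{p/2}$; (2) H\"older's inequality on $\Gamma_\sigma(\zeta)\cap A_j$, of area $\asymp 2^{-2j}$, together with $1-|w|\asymp 2^{-j}$ on $A_j$, yields $a_j(\zeta)^{p/2}\lesssim 2^{j}\int_{\Gamma_\sigma(\zeta)\cap A_j}|f'(w)|^p(1-|w|)^{p-1}\,dA(w)$; (3) integrating in $\zeta$ and using Fubini, the factor $|\{\zeta:w\in\Gamma_\sigma(\zeta)\}|\asymp 1-|w|\asymp 2^{-j}$ cancels the $2^j$, so $\int_\T S_{f_r}(\zeta)^p\,|d\zeta|\lesssim (N+1)^{p/2-1}\|f\|_{\Dp}^p$, which with \eqref{fsest} and $N+1\asymp\log\tfrac{e}{1-r}$ is the desired bound. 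I expect the main difficulty here to be exactly this bookkeeping: one must organise the two H\"older steps so that the total loss is only the logarithmic factor $(\log\tfrac{e}{1-r})^{p/2-1}$ — the naive bound $M_p(r,f)\lesssim|f(0)|+\int_0^r M_p(s,f')\,ds$ produces the strictly worse exponent $1-\tfrac1p$, and the cancellation in step (3) is what prevents a power of $1-r$ from surviving.

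\textbf{Part (ii).} I would look for $f$ among lacunary series $f(z)=c+\sum_{k\ge0}a_k z^{2^k}$ with $a_k\ge0$, for which the classical Paley--Zygmund estimates give, for every $0<q<\infty$,
\[
M_q(r,f)\asymp\Big(c^2+\sum_k a_k^2 r^{2^{k+1}}\Big)^{1/2},\qquad M_p(r,f')\asymp\Big(\sum_k a_k^2 4^k r^{2^{k+1}}\Big)^{1/2}.
\]
Set $r_k=1-2^{-k}$ and choose $a_k^2=\big(\tfrac{\Phi'(r_k)}{\Phi^2(r_k)}(1-r_k)\big)^{2/p}$. From the hypotheses, $r\mapsto\tfrac{\Phi'(r)}{\Phi^2(r)}(1-r)$ is decreasing — it is the product of the decreasing function $\tfrac{\Phi'(r)}{\Phi(r)}(1-r)$ with the decreasing function $1/\Phi(r)$ — so $(a_k^2)_k$ is decreasing and $a_j^p=\max_{r\in[r_j,r_{j+1}]}\tfrac{\Phi'(r)}{\Phi^2(r)}(1-r)$. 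Hence, for $r\in[r_j,r_{j+1}]$ one has $r^{2^{k+1}}\asymp1$ for every $k\le j$, so $M_q(r,f)^2\gtrsim\sum_{k\le j}a_k^2\ge(j+1)a_j^2$; since $j+1\asymp\log\tfrac{e}{1-r}$ and $a_j^2\ge\big(\tfrac{\Phi'(r)}{\Phi^2(r)}(1-r)\big)^{2/p}$ throughout $[r_j,r_{j+1}]$, this gives \eqref{sharp} (a large enough $c$ handles $r$ bounded away from $1$).

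Finally, I would verify $f\in\Dp$, i.e. $\int_0^1 M_p(r,f')^p(1-r)^{p-1}\,dr<\infty$. Splitting over the blocks $[r_j,r_{j+1}]$ and using that, for $r$ in the $j$-th block, $r^{2^{k+1}}\asymp1$ for $k\le j$ while it is super-exponentially small for $k>j$, this reduces to
\[
\sum_{j\ge0}2^{-jp}\Big(\sum_{k\le j}a_k^2 4^k\Big)^{p/2}<\infty .
\]
Writing $\sum_{k\le j}a_k^2 4^k=4^j\sum_{m=0}^j a_{j-m}^2 4^{-m}$, then applying H\"older's inequality with exponents $\tfrac p2$ and $\tfrac p{p-2}$ to the inner sum and Fubini to the resulting double sum, the left-hand side is bounded by a constant multiple of $\sum_k a_k^p$; and $\sum_k a_k^p=\sum_k\tfrac{\Phi'(r_k)}{\Phi^2(r_k)}(1-r_k)\asymp\int_0^1\tfrac{\Phi'(r)}{\Phi^2(r)}\,dr=\tfrac1{\Phi(0)}-\lim_{r\to1^-}\tfrac1{\Phi(r)}<\infty$, where the comparison of the series with the integral again uses that $\tfrac{\Phi'(r)}{\Phi^2(r)}(1-r)$ is decreasing. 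This last step — transferring convergence of the integral of $\Phi'/\Phi^2$ into membership in $\Dp$ — is where the structural hypotheses on $\Phi$ genuinely enter, and I expect it to be the delicate point of part (ii).
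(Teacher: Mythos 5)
Your proposal is correct, and part (ii) is essentially the paper's own argument: the paper takes exactly the same lacunary series $\sum_k a_k z^{2^k}$ with $a_k^p\asymp\frac{h(r_k)-h(r_{k-1})}{\Phi(r_k)}\asymp\frac{\Phi'(r_k)}{\Phi^2(r_k)}(1-r_k)$, $h=\log\Phi$, bounds $\|f\|_{\Dp}^p$ by $\sum_k a_k^p\le\int_0^1\Phi'/\Phi^2\,dt=\Phi(0)^{-1}$, and gets the lower bound for $M_2$ from the monotonicity of $\frac{\Phi'(r)}{\Phi(r)}(1-r)$ together with Zygmund's equivalence of all $M_q$ for lacunary series; the only difference is that you prove the block estimate $\|f\|_{\Dp}^p\lesssim\sum_k a_k^p$ by hand (your H\"older/Fubini computation is a correct proof of the result the paper imports as \cite[Proposition~3.2]{GP:IE06}). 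Part (i) is where you genuinely diverge: the paper deduces the growth bound from the embedding $\Dp\subset A^p_{v_{p/2}}$ of \cite[Theorem~1.4]{GPP} and, notably, gets the little-$\op$ for free because $M_p^p(r,f)\big(\log\frac{e}{1-r}\big)^{1-p/2}$ is dominated by the \emph{tail} $\int_r^1 sM_p^p(s,f)v_{p/2}(s)\,ds$ of a convergent integral, with no density argument needed; you instead prove the pointwise $\og$-estimate directly via Fefferman--Stein and a dyadic decomposition of the truncated Stolz angle (your steps (1)--(3) are correct, and the cancellation of $2^{j}$ against $|I(w)|\asymp2^{-j}$ is indeed the crux), then upgrade to $\op$ by density of polynomials in $\Dp$. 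Your route is self-contained and in effect reproves the relevant case of the cited embedding, at the cost of an extra (standard but unproved) density step; the paper's route is shorter and avoids density, at the cost of invoking an external theorem.
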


Part~(i) is essentially known, but we include a proof for the sake of completeness. Part (ii), apart from showing that (i) is sharp in a very strong, will be used to prove
Theorem~\ref{th:2}(ii) and the sharpness of Theorem~\ref{th:cmdp}.
It is also worth noticing that each function
    \begin{equation}\label{Eq:PesosEnV-alpha}
    \Phi_{N,\a}(r)=\left(\log_{N}\frac{\exp_{N}2}{1-r}\right)^\a,\quad N\in\N=\{1,2,\ldots\},\quad0<\a<\infty,
    \end{equation}
satisfies both hypotheses on the auxiliary function $\Phi$ in Lemma~\ref{integralmeans}. Here, as usual,
$\log_nx=\log(\log_{n-1}x)$, $\log_1x=\log x$,
$\exp_n x=\exp(\exp_{n-1}x)$ and $\exp_1x=e^x$.

\medskip

\begin{Prf}{\em{Lemma~\ref{integralmeans}}.}\,
(i) First observe that \cite[Theorem~1.4]{GPP} yields
    \begin{equation}\label{dpvp2}
    \Dp\subset A^p_{v_{\frac{p}{2}}},\quad\|f\|^p_{\Dp} \gtrsim \|f\|^p_{A^p_{v_{\frac{p}{2}}}},\quad f\in\H(\D),
    \end{equation}
where $A^p_{v_{\frac{p}{2}}}$ denotes the weighted Bergman space induced by the rapidly increasing weight
    $
    v_{\frac{p}{2}}(z)=\frac{1}{(1-|z|)\left(\log\frac{e}{1-|z|}\right)^{\frac{p}{2}}},\quad z\in\D,
    $
see \cite[Section~1.2]{PelRat}. Therefore,
    \begin{equation*}\begin{split}
    \|f\|^p_{\Dp}&\gtrsim \|f\|^p_{A^p_{v_{\frac{p}{2}}}}
    \ge \int_r^1 sM^p_p(s,f)v_{\frac{p}{2}}(s)\,ds
     \ge M^p_p(r,f)\int_r^1 s v_{\frac{p}{2}}(s)\,ds\\
    &\asymp M_p^p(r,f)\left(\log\frac{e}{1-r}\right)^{1-\frac{p}{2}},\quad 0<r<1,
    \end{split}\end{equation*}
and (i) follows.

(ii) Let $\Phi$ be as in the lemma. Consider the lacunary series
    \begin{equation}\label{eqlac}
    f(z)=\sum_{k=1}^\infty\left(\frac{h(r_k)-h(r_{k-1})}{\Phi(r_k)}\right)^\frac{1}{p}
    z^{2^k},\quad r_k=1-2^{-k},\quad k\in\N,
    \end{equation}
where $h(r)=\log\Phi(r)$ is a positive function such that
$h'(r)(1-r)$ is decreasing by the assumptions.
By \cite[Proposition~3.2]{GP:IE06},
    \begin{equation*}
    \begin{split}
    \|f\|_{\mathcal{D}^p_{p-1}}^p&\lesssim\sum_{k=1}^\infty\left(\frac{h(r_k)-h(r_{k-1})}{\Phi(r_k)}\right)\\
    &=\sum_{k=1}^\infty\frac{\int_{r_{k-1}}^{r_k}h'(t)\,dt}{\Phi(r_k)}\le\int_0^1\frac{h'(t)}{\Phi(t)}\,dt=\Phi(0)^{-1}<1,
    \end{split}
    \end{equation*}
and thus $f\in\Dp$.

On the other hand
    \begin{equation*}
    \begin{split}
    M_2^2(r_N,f)&=\sum_{k=1}^\infty \left(\frac{h(r_k)-h(r_{k-1})}{\Phi(r_k)}\right)^\frac{2}{p}
    r_N^{2^{k+1}}\\
    &\ge\sum_{k=1}^N \left(\frac{h(r_k)-h(r_{k-1})}{\Phi(r_k)}\right)^\frac{2}{p}
    r_N^{2^{k+1}}\\
    &\ge\frac{r_N^{2^{N+1}}}{(\Phi(r_N))^\frac2p}
    \sum_{k=1}^N\left(\int_{r_{k-1}}^{r_k}h'(s)(1-s)\frac{ds}{1-s}\right)^\frac2p\\
    &\ge\frac{r_N^{2^{N+1}}(\log2)^\frac2p}{(\Phi(r_N))^\frac2p}\sum_{k=1}^N\left(h'(r_k)(1-r_k)\right)^\frac2p\\
    &\gtrsim\frac{1}{(\Phi(r_N))^\frac2p}\left(h'(r_N)(1-r_N)\right)^\frac2pN
    \end{split}
    \end{equation*}
Let $r\in[\frac12,1)$ be given, and choose $N\in\N$ such that $r_N\le
r<r_{N+1}$. Then \cite[Theorem $8.20$ in p. $215$ Vol I]{Zygmund59} yields
    \begin{equation*}
    \begin{split}
    M_q^2(r,f)&\asymp M_2^2(r,f)\ge
    M_2^2(r_N,f)\gtrsim\frac{1}{(\Phi(r_N))^\frac2p}\left(h'(r_N)(1-r_N)\right)^\frac2pN\\
    &\gtrsim\frac{1}{(\Phi(r))^\frac2p}\left(h'(r)(1-r)\right)^\frac2p\log\frac{e}{1-r}\\
    &\asymp
     \left(\log\frac{e}{1-r}\right)\left(\frac{\Phi'(r)}{\Phi^2(r)}(1-r)\right)^{\frac{2}{p}},
    \end{split}
    \end{equation*}
    which finishes the proof.
\end{Prf}

\medskip

With these preparations we are ready to prove Theorem~\ref{th:2}.

\medskip

\begin{Prf}{\em{Theorem \ref{th:2}}}
(i) If $T_g: \Dp\to H^p$ is bounded, then $T_g: H^p\to H^p$ is
bounded because $H\sp p\subsetneq\Dp$ for $2<p<\infty$ by \eqref{1}, and hence
$g\in\BMOA$.

(ii) 
In this part we use ideas from the proof of \cite[Theorem~2.1]{GP:IE06}.
Take a function $\Phi$ as in Lemma~\ref{integralmeans} and let $f\in\Dp$ the lacunary series associated to $\Phi$ via \eqref{eqlac}.
By using \cite[Theorem~8.25, Chap. $V$,
Vol.~I]{Zygmund59}, we find two constants $A>0$ and $B>0$ such
that for every $r\in (0,1)$ the set
    \begin{equation}\label{eq:f2}
    E_r=\{t\in[0,2\pi]:|f(re^{it})|>BM_2(r,f)\}
    \end{equation}
has the Lebesgue measure greater than or equal to $A$. Let now $g$
be a lacunary series. By using \cite[Lemma~6.5, Chap.~V,
Vol.~I]{Zygmund59} we find a constant $C_1>0$ such that
    \begin{equation}\label{eq3}
    \int_{E_r}|g'(re^{it})|^2\,dt \ge C_1AM^2_2(r,g')=C_2M^2_2(r,g'),\quad
    0<r<1,
    \end{equation}
where $C_2=C_1A$. Bearing in mind the definition \eqref{eq:f2} of
the sets $E_r$ and using \eqref{eq3},
we obtain
    \begin{equation}\label{eq:f}
    \begin{split}
   &  \|T_g(f)\|^2_{H^p} \ge  \|T_g(f)\|^2_{H^2}
     \gtrsim \int_{\D}|f(z)|^2|g'(z)|^2(1-|z|^2)\,dA(z)
    \\ &\ge
    \int_{0}^1r(1-r)\int_{E_r}|f(re^{it})|^2|g'(re^{it})|^2\,dt\,dr
    \\ & \ge
    B^2\int_{0}^1r(1-r)M_2^2(r,f)\int_{E_r}|g'(re^{it})|^2\,dt\,dr
    \\ & \ge
    B^2C_2\int_{0}^1r(1-r)M_2^2(r,f)M^2_2(r,g')\,dr
    \\ & \ge
    B^2C_2C\int_{0}^1r(1-r)\left(\log\frac{e}{1-r}\right)\left(\frac{\Phi'(r)}{\Phi^2(r)}(1-r)\right)^{\frac{2}{p}}
    M^2_2(r,g')\,dr.
    \end{split}
    \end{equation}
Choose now $\Phi(r)=\left(\log\frac{e}{1-r}\right)^{\ep}$, where $0<\ep<\frac{p}{2}-1$, so that
    $$
    \left(\log\frac{e}{1-r}\right)\left(\frac{\Phi'(r)}{\Phi^2(r)}(1-r)\right)^{\frac{2}{p}}
    \asymp\left(\log\frac{e}{1-r}\right)^{1-\frac{2}{p}(1+\ep)}.
    $$
Further, let
    $$
    g(z)=\sum_{j=0}^\infty \frac{1}{(j+1)\left(\log{j+1}\right)^\alpha} z^{2^{2^j}},\quad1<\alpha<\infty.
    $$
Then clearly $g\in \mathcal{A}$. 
Moreover, since $\om(r)=(1-r)\left(\log\frac{e}{1-r}\right)^{1-\frac{2}{p}(1+\ep)}$ is a so-called regular weight, we deduce
    $$
    \int_{0}^1r^{2n+1}\om(r)\asymp n^{-1}\om(1-n^{-1}),\quad n\in\N,
    $$
by \cite[Lemma~1.3 and (1.1)]{PelRat}. This together with \eqref{eq:f} yields
\begin{equation*}
    \begin{split}
   &  \|T_g(f)\|^2_{H^p} \gtrsim
    \int_{0}^1r(1-r)\left(\log\frac{e}{1-r}\right)^{1-\frac{2}{p}(1+\ep)}M^2_2(r,g')\,dr
    \\ & \asymp \sum_{j=1}^\infty \frac{2^{2^{j+1}}}{(j+1)^2\left(\log{j+1}\right)^{2\alpha}} \left(\int_{0}^1r^{2^{2^j+1}-1}(1-r)\left(\log\frac{e}{1-r}\right)^{1-\frac{2}{p}(1+\ep)}\,dr\right)
     \\ & \asymp \sum_{j=1}^\infty \frac{2^{(j+1)\left(1-\frac{2}{p}(1+\ep)\right)}}{(j+1)^2\left(\log{j+1}\right)^{2\alpha}}=\infty,
    \end{split}
    \end{equation*}
and finishes the proof.
\end{Prf}

\section{Carleson measures for the Dirichlet space
$\mathcal{D}^p_{p-1}$}\label{Section:Dp}

The statement in Theorem~\ref{th:cmdp} follows directly by \eqref{dpvp2} and \cite[Theorem~2.1]{PelRat} with
$\om=v_{p/2}$. We next show that this result is sharp in a very strong sense. For this purpose, the following lemma is needed.

\begin{lemma}\label{Lem:BernoulliHospital}
Let $2<p<\infty$, and let $\Phi:[0,1)\to(0,\infty)$ be a
differentiable increasing function such that
    \begin{equation}\label{111}
    \frac{\Phi(r)}{\left(\log\frac{e}{1-r}\right)^{\frac{p}{2}-1}}\to0,\quad
    r\to1^-,
    \end{equation}
and
    \begin{equation}\label{112}
    m=-\liminf_{r\to1^-}\frac{\Phi'(r)}{\Phi(r)}(1-r)\log\frac{e}{1-r}>1-\frac{p}{2}.
    \end{equation}
Then
    $$
    \int_r^1\frac{\Phi(s)\,ds}{(1-s)\left(\log\frac{e}{1-s}\right)^{\frac{p}{2}}}
    \lesssim\frac{\Phi(r)}{\left(\log\frac{e}{1-r}\right)^{\frac{p}{2}-1}},\quad
    r\in(0,1).
    $$
\end{lemma}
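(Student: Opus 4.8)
The plan is to treat this as an asymptotic estimate for an integral whose integrand is, up to the factor $\Phi$, the density of the weight $v_{p/2}$, and to exploit the hypothesis \eqref{112} exactly as one would in a Bernoulli--l'H\^opital argument. Write $I(r)=\int_r^1\Phi(s)(1-s)^{-1}\left(\log\frac{e}{1-s}\right)^{-p/2}\,ds$ and $J(r)=\Phi(r)\left(\log\frac{e}{1-r}\right)^{-p/2+1}$. Condition \eqref{111} guarantees that $J(r)\to0$ as $r\to1^-$, and it also forces $I(r)$ to be finite (and $\to0$); indeed since $m>1-\frac p2$, for $r$ close to $1$ we have $\Phi'(s)/\Phi(s)\le -(1-\d)(1-s)^{-1}\big(\log\frac{e}{1-s}\big)^{-1}$ for some $\d>0$ with $1-\d+\frac p2>1$, hence $\Phi(s)\lesssim\big(\log\frac{e}{1-s}\big)^{-(1-\d)}$, which is integrable against $(1-s)^{-1}\big(\log\frac{e}{1-s}\big)^{-p/2}$ near $1$ because $p/2+1-\d>1$. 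So both $I$ and $J$ are well defined, positive, and tend to $0$.

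The main step is to compare $I'(r)$ and $J'(r)$. We have $I'(r)=-\Phi(r)(1-r)^{-1}\left(\log\frac{e}{1-r}\right)^{-p/2}$, while
\[
J'(r)=\Phi'(r)\left(\log\tfrac{e}{1-r}\right)^{-p/2+1}+\left(\tfrac p2-1\right)\Phi(r)(1-r)^{-1}\left(\log\tfrac{e}{1-r}\right)^{-p/2}.
\]
Therefore
\[
\frac{I'(r)}{J'(r)}=\frac{-\Phi(r)(1-r)^{-1}\left(\log\frac{e}{1-r}\right)^{-p/2}}
{\Phi'(r)\left(\log\frac{e}{1-r}\right)^{-p/2+1}+\left(\frac p2-1\right)\Phi(r)(1-r)^{-1}\left(\log\frac{e}{1-r}\right)^{-p/2}},
\]
and dividing numerator and denominator by $\Phi(r)(1-r)^{-1}\left(\log\frac{e}{1-r}\right)^{-p/2}$ gives
\[
\frac{I'(r)}{J'(r)}=\frac{-1}{\dfrac{\Phi'(r)}{\Phi(r)}(1-r)\log\frac{e}{1-r}+\left(\frac p2-1\right)}.
\]
By \eqref{112} the quantity $\frac{\Phi'(r)}{\Phi(r)}(1-r)\log\frac{e}{1-r}+\left(\frac p2-1\right)$ has $\liminf$ equal to $-m+\frac p2-1<0$, so for $r$ near $1$ the denominator is bounded away from $0$ and negative, whence $I'(r)/J'(r)$ is bounded above by a positive constant for $r$ near $1$. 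Since $I,J\to0$ at $1^-$ and $J'<0$ eventually (as $J$ decreases to $0$), one concludes $I(r)\lesssim J(r)$ as $r\to1^-$ by the standard l'H\^opital-type monotonicity argument: if $\limsup_{r\to1^-} I'(r)/J'(r)\le C$ then $\limsup_{r\to1^-} I(r)/J(r)\le C$ as well. Finally, on any compact subinterval $[0,r_0]$ of $[0,1)$ both $I$ and $J$ are continuous and positive, so the inequality $I(r)\lesssim J(r)$ holds there trivially; combining the two ranges yields the claim for all $r\in(0,1)$.

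The delicate point — and the only one that needs care — is the passage from $\limsup I'/J'\le C$ to $\limsup I/J\le C$ when $J'$ has a sign but is not monotone and $I'/J'$ is only controlled near $1$. The clean way to handle this is to pick $\d'>0$ and $r_1<1$ so that $I'(s)\le (C+\d')J'(s)$ for all $s\in(r_1,1)$ (recall $J'(s)<0$ there, so this is a genuine pointwise inequality obtained by clearing the negative denominator), integrate from $r$ to $1$ using $I(1^-)=J(1^-)=0$, and obtain $-I(r)\le (C+\d')\big(-J(r)\big)$, i.e. $I(r)\le(C+\d')J(r)$ for $r\in(r_1,1)$. I expect the writeup to spend most of its length justifying finiteness of $I$ and the sign of $J'$ near $1$ from \eqref{111}--\eqref{112}; the l'H\^opital computation itself is a short algebraic simplification once $I'$ and $J'$ are written out.
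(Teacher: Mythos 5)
Your strategy is exactly the paper's: the published proof is a one--line application of the Bernoulli--l'H\^opital theorem to the quotient $I(r)/J(r)$, giving $\limsup_{r\to1^-}I(r)/J(r)\le\left(m+\tfrac p2-1\right)^{-1}$, so both the differentiation step and the final integration trick are the intended ones. The execution, however, contains concrete errors. First, $J'$ is computed with the wrong sign: since $\frac{d}{dr}\left(\log\frac{e}{1-r}\right)^{1-p/2}=\left(1-\tfrac p2\right)\left(\log\frac{e}{1-r}\right)^{-p/2}(1-r)^{-1}$, the coefficient of the second term of $J'$ is $1-\tfrac p2<0$, not $\tfrac p2-1$. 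The correct normalized denominator is therefore $D(r)=\frac{\Phi'(r)}{\Phi(r)}(1-r)\log\frac{e}{1-r}+1-\tfrac p2$, whose $\liminf$ is $-m+1-\tfrac p2$, and it is \emph{this} quantity that \eqref{112} makes negative. With the denominator you wrote, the expression is at least $\tfrac p2-1>0$ for every $r$ (because $\Phi$ is increasing, so $\Phi'/\Phi\ge0$), so your assertion that its $\liminf$ equals $-m+\tfrac p2-1<0$ is false — note that $m=-\liminf(\cdots)\le0$ here — and the argument as written collapses. The same sign confusion infects your finiteness argument: the bound $\Phi'(s)/\Phi(s)\le-(1-\delta)(1-s)^{-1}\left(\log\frac{e}{1-s}\right)^{-1}$ and the conclusion $\Phi(s)\lesssim\left(\log\frac{e}{1-s}\right)^{-(1-\delta)}$ cannot hold for an increasing positive $\Phi$. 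Finiteness of $I$ should instead be extracted from the integration step itself: integrate the pointwise inequality $-I'\le C(-J')$ over $(r,\rho)$ and let $\rho\to1^-$.

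A second, more structural point: \eqref{112} is a $\liminf$ condition, whereas your ``clean way'' requires the pointwise inequality $I'(s)\le(C+\delta')J'(s)$ with $J'(s)<0$ for \emph{all} $s$ near $1$, which amounts to $\limsup_{r\to1^-}\frac{\Phi'(r)}{\Phi(r)}(1-r)\log\frac{e}{1-r}<\tfrac p2-1$. A bound on the $\liminf$ only makes $D$ negative along a sequence, so the claim that ``for $r$ near $1$ the denominator is bounded away from $0$ and negative'' does not follow from the stated hypothesis. The paper's one-line proof glosses over the same distinction (for the examples $\Phi_c$ and $\Phi_n$ the quantity in \eqref{112} actually has a limit, so no harm is done there), but in a self-contained write-up you must either assume the quantity has a limit (or replace $\liminf$ by $\limsup$) or supply an additional argument; as it stands this step is a genuine gap in your proof.
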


\begin{proof}
By the Bernouilli-l'H\^{o}pital theorem,
    \begin{equation*}
    \begin{split}
    \limsup_{r\to1^-}\frac{\int_r^1\frac{\Phi(s)\,ds}{(1-s)\left(\log\frac{e}{1-r}\right)^{\frac{p}{2}}}}
    {\frac{\Phi(r)}{\left(\log\frac{e}{1-r}\right)^{\frac{p}{2}-1}}}
    \le\left(m+\frac{p}{2}-1\right)^{-1}\in(0,\infty),
    \end{split}
    \end{equation*}
and the assertion follows.
\end{proof}

If
$\Phi_c(r)=\left(\log\frac{e}{1-r}\right)^c$ and $c>0$, then
    $$
    \frac{\Phi_c'(r)}{\Phi_c(r)}(1-r)\log\frac{e}{1-r}=c,\quad 0<r<1,
    $$
and thus $\Phi_c$ satisfies both \eqref{111} and \eqref{112} if $c<\frac{p}{2}-1$. Further, each function $\Phi_n(r)=\log_n\frac{\exp_n(2)}{1-r}$, $n\in\N$, satisfies
    $$
    \frac{\Phi_n'(r)}{\Phi_n(r)}(1-r)\log\frac{e}{1-r}\to0,\quad
    r\to1,
    $$
and hence satisfies all hypotheses of the next result.

\begin{proposition}\label{pr:sharp}
Let $2<p<\infty$, and let $\Phi:[0,1)\to(1,\infty)$ be a
differentiable increasing unbounded function such that
$\frac{\Phi'(r)}{\Phi(r)}(1-r)$ is decreasing and \eqref{111} and
\eqref{112} are satisfied. Then there exists a positive Borel
measure $\mu$ on $\D$ such that
    \begin{equation}\label{cardpno}
    \sup_{I\subset\T}\frac{\mu\left(S(I)\right)}{|I|\left(\log\frac{e}{|I|}\right)^{-p/2+1}\Phi(1-|I|)}<\infty,
    \end{equation}
but $\mu$ is  not a $p$-Carleson measure for $\Dp$.
\end{proposition}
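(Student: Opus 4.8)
The plan is to take $\mu$ \emph{radial}, with radial density dictated exactly by the target box bound \eqref{cardpno}, and then to violate the Carleson embedding $\Dp\hookrightarrow L^p(\mu)$ by testing against the extremal lacunary function supplied by Lemma~\ref{integralmeans}(ii). Concretely, set
\[
g(r)=\frac{\Phi(r)}{(1-r)\bigl(\log\tfrac{e}{1-r}\bigr)^{p/2}},\qquad 0\le r<1,
\]
and let $\mu$ be the rotation-invariant positive Borel measure on $\D$ given in polar coordinates by $d\mu(re^{i\theta})=g(r)\,dr\,\tfrac{d\theta}{2\pi}$; this is legitimate since $g$ is continuous and nonnegative on $[0,1)$. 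By rotation invariance, $\mu(S(I))=|I|\int_{1-|I|}^{1}g(r)\,dr$ for every arc $I\subset\T$, so the boundedness of the left side of \eqref{cardpno} reduces verbatim to the estimate $\int_{1-|I|}^{1}g(r)\,dr\lesssim\bigl(\log\tfrac{e}{|I|}\bigr)^{-p/2+1}\Phi(1-|I|)$. This is precisely the conclusion of Lemma~\ref{Lem:BernoulliHospital}, whose hypotheses \eqref{111}--\eqref{112} (together with differentiability and monotonicity) are part of the standing assumptions on $\Phi$; hence \eqref{cardpno} holds.

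\emph{Failure of the Carleson embedding.} The conditions imposed on $\Phi$ in Proposition~\ref{pr:sharp} are exactly those required of the auxiliary weight in Lemma~\ref{integralmeans}, so let $f$ be the lacunary series \eqref{eqlac} associated with this \emph{same} $\Phi$. By Lemma~\ref{integralmeans}(ii) with $q=p$ we have $f\in\Dp$ and
\[
M_p^p(r,f)\gtrsim\Bigl(\log\tfrac{e}{1-r}\Bigr)^{p/2}\frac{\Phi'(r)}{\Phi^2(r)}(1-r),\qquad 0<r<1.
\]
The crucial point is that multiplying by $g(r)$ produces a clean cancellation, $g(r)M_p^p(r,f)\gtrsim\Phi'(r)/\Phi(r)$, whence
\[
\int_{\D}|f(z)|^p\,d\mu(z)=\int_0^1 g(r)M_p^p(r,f)\,dr\gtrsim\int_{1/2}^1\frac{\Phi'(r)}{\Phi(r)}\,dr=\lim_{r\to1^-}\log\Phi(r)-\log\Phi(\tfrac12)=\infty,
\]
because $\Phi$ is unbounded. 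Since one function of $\Dp$ with infinite $L^p(\mu)$-norm already destroys boundedness of $I_d:\Dp\to L^p(\mu)$, this shows $\mu$ is not a $p$-Carleson measure for $\Dp$, completing the proof.

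\emph{Where care is needed.} There is no genuinely hard estimate here once the two design choices are made: (i) using a radial $\mu$, so that $\mu(S(I))$ depends only on the radial integral and \eqref{cardpno} becomes exactly Lemma~\ref{Lem:BernoulliHospital}; and (ii) using the same weight $\Phi$ in the density of $\mu$ and as the auxiliary weight of the test series, which forces $g(r)M_p^p(r,f)\gtrsim\Phi'/\Phi$ and hence divergence. The only things to double-check are that the hypotheses of Lemmas~\ref{Lem:BernoulliHospital} and~\ref{integralmeans} are indeed met by $\Phi$ (they are, being among the assumptions of Proposition~\ref{pr:sharp}), and that the normalization chosen for $\mu$ makes both the identity $\mu(S(I))=|I|\int_{1-|I|}^1 g$ and the Fubini computation $\int_{\D}|f|^p\,d\mu=\int_0^1 g(r)M_p^p(r,f)\,dr$ exact — which is precisely why the polar form above is convenient.
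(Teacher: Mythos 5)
Your proof is correct and follows essentially the same route as the paper: the same radial measure with density $\Phi(r)\,(1-r)^{-1}\bigl(\log\tfrac{e}{1-r}\bigr)^{-p/2}$, the box condition verified via Lemma~\ref{Lem:BernoulliHospital}, and the embedding destroyed by testing against the lacunary series of Lemma~\ref{integralmeans}(ii) built from the same $\Phi$, with the cancellation $g(r)M_p^p(r,f)\gtrsim\Phi'(r)/\Phi(r)$ forcing divergence. No discrepancies worth noting.
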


\begin{proof}
The radial measure
    $$
    d\mu(z)=\frac{\Phi(|z|)\,dA(z)}{(1-|z|)\left(\log\frac{e}{1-|z|}\right)^{p/2}},\quad
    z\in\D.
    $$
satisfies \eqref{cardpno} by Lemma~\ref{Lem:BernoulliHospital}. To
see that $\mu$ is not a $p$-Carleson measure for $\Dp$, consider
the lacunary series associated to $\Phi$ via \eqref{eqlac}. By Lemma \ref{integralmeans}, $f\in\Dp$ and
   \begin{equation*}
    \begin{split}
    \|f\|^p_{L^p(\mu)}&=\int_0^1\frac{M_p^p(r,f)\Phi(r)}{(1-r)\left(\log\frac{e}{1-r}\right)^{p/2}}r\,dr
    \\ & \gtrsim \int_0^1 \frac{r\Phi'(r)}{\Phi(r)}\,dr
   \gtrsim  \lim_{t\to1-}\log \Phi(t)=\infty,
    \end{split} \end{equation*}
   which finishes the proof.
\end{proof}

\end{document}